\documentclass[12pt,a4]{amsart}
\usepackage[a4paper, left=28mm, right=28mm, top=28mm, bottom=34mm]{geometry}
\usepackage[all]{xy}
\usepackage{amsmath}
\usepackage{amssymb}
\usepackage{amsthm}
\usepackage{mathrsfs}
\usepackage{mathtools}
\usepackage{mathabx}
\theoremstyle{definition}
\newtheorem{thm}{Theorem}[section]
\newtheorem{lem}[thm]{Lemma}
\newtheorem{cor}[thm]{Corollary}
\newtheorem{prop}[thm]{Proposition}
\newtheorem{conj}[thm]{Conjecture}
\theoremstyle{definition}

\newtheorem{rem}[thm]{Remark}

\numberwithin{equation}{section}
\def\A{{\mathbb A}}

\def\Q{{\mathbb Q}}
\def\R{{\mathbb R}}
\def\Z{{\mathbb Z}}
\def\C{{\mathbb C}}

\def\Lie{\mathop{\mathrm{Lie}}\nolimits}

\def\Ker{\mathop{\mathrm{Ker}}\nolimits}

\def\Pic{\mathop{\mathrm{Pic}}\nolimits}

\def\GL{\mathop{\mathrm{GL}}\nolimits}
\def\SL{\mathop{\mathrm{SL}}\nolimits}

\def\Tr{\mathop{\mathrm{Tr}}\nolimits}
\def\det{\mathop{\mathrm{det}}\nolimits}
\def\dim{\mathop{\mathrm{dim}}\nolimits}

\def\L{\mathscr{L}}
\def\H{\mathscr{H}}
\def\g{\mathfrak{g}}

\def\N{\mathscr{N}}
\def\Res{\mathop{\mathrm{Res}}\nolimits}
\def\GSpin{\mathop{\mathrm{GSpin}}\nolimits}
\def\CH{\mathop{\mathrm{CH}}\nolimits}

\def\exp{\mathop{\mathrm{exp}}\nolimits}
\def\Sp{\mathop{\mathrm{Sp}}\nolimits}
\def\SO{\mathop{\mathrm{SO}}\nolimits}
\def\O{\mathop{\mathrm{O}}\nolimits}
\def\Mp{\mathop{\mathrm{Mp}}\nolimits}
\def\Sym{\mathop{\mathrm{Sym}}\nolimits}

\def\O{\mathop{\mathrm{O}}\nolimits}
\def\S{\textbf{\textsl{S}}}

\newcommand{\transp}[1]{{}^{t}\!{#1}}
\newcommand{\defeq}{\vcentcolon=}

\begin{document}

\title[Modularity of special cycles on orthogonal Shimura varieties]
{The modularity of special cycles on orthogonal Shimura varieties over totally real fields under the Beilinson-Bloch conjecture}

\author{Yota Maeda}
\address{Department of Mathematics, Faculty of Science, Kyoto University, Kyoto 606-8502, Japan}
\email{y.maeda@math.kyoto-u.ac.jp}

\date{October 3, 2019}

\subjclass[2010]{Primary 11G18; Secondary 11F46, 14C17}
\keywords{Shimura varieties, modular forms, algebraic cycles}


\date{\today}

\maketitle

\begin{abstract}
We study special cycles on a Shimura variety of orthogonal type over a totally real field of degree $d$ associated with a quadratic form in $n+2$ variables whose signature is $(n,2)$ at $e$ real places and $(n+2,0)$ at the remaining $d-e$ real places for $1\leq e <d$.
Recently, these cycles were constructed by Kudla and Rosu-Yott and they proved that the generating series of special cycles in the cohomology group is a Hilbert-Siegel modular form of half integral weight.
We prove that, assuming the Beilinson-Bloch conjecture on the injectivity of the higher Abel-Jacobi map, the generating series of special cycles of codimension $er$ in the Chow group is a Hilbert-Siegel modular form of genus $r$ and weight $1+n/2$.
Our result is a generalization of \textit{Kudla's modularity conjecture}, solved by Yuan-Zhang-Zhang unconditionally when $e=1$.
\end{abstract}

\section{Introduction}

In this paper, we prove that, assuming the Beilinson-Bloch conjecture on the injectivity of the higher Abel-Jacobi map, the generating series of special cycles in the Chow groups of a Shimura variety of orthogonal type is a Hilbert-Siegel modular form of half integral weight.
These cycles were constructed by Kudla \cite{Remarks} and Rosu-Yott \cite{RosuYott}.

Historically, Kudla and Millson studied the cohomology groups in \cite{KM}.
Kudla conjectured the modularity of the generating series of special cycles in the Chow groups in \cite{SpecialKudla} and he proved it for one-codimensional Chow cycles, using the results of Borcherds \cite{Borcherds}.
This conjecture is often called \textit{Kudla's modularity conjecture}.
In his thesis \cite{Thesis}, Wei Zhang proved it for higher codimensional Chow cycles on Shimura varieties of orthogonal type associated with a quadratic form of signature $(n,2)$ over $\Q$ by his modularity criterion.
His criterion works only over $\Q$ because its proof depends on the results of Borcherds \cite{BorcherdsLift}.
Yuan-Zhang-Zhang \cite{YZZ} extended Wei Zhang's results \cite{Thesis} to totally real fields.
Their proof is similar to Wei Zhang's proof over $\Q$ in view of using induction on the codimension of Chow cycles and calculating element-wise modularity.

Recently, Kudla \cite{Remarks} and Rosu-Yott \cite{RosuYott} generalized Kudla-Millson's work by changing the signature of the quadratic form.
Rosu-Yott \cite{RosuYott} studied special cycles in the cohomology groups only, so did not generalize Yuan-Zhang-Zhang's work.
In this paper, we shall generalize the results of Yuan-Zhang-Zhang \cite{YZZ} under the Beilinson-Bloch conjecture.
In the same setting as \cite{Remarks}, \cite{RosuYott} and assuming the Beilinson-Bloch conjecture on the injectivity of the higher Abel-Jacobi map, we prove the modularity of the generating series of special cycles in the Chow groups.
(For the precise statement, see Theorem \ref{MainTheorem1} and Theorem \ref{MainTheorem2}.)

After the first version of this paper was written, the author learned that Kudla independly obtained similar results in his recent preprint \cite{Remarks}.
His results and proof are different from ours.
More precisely, in \cite{Remarks}, he assumed the Beilinson-Bloch conjecture for Chow cycles of codimension $er$, and proved the absolute convergence and the modularity of the generating series.
In contrast, even if $r\geq 2$, we assume the Beilinson-Bloch conjecture for Chow cycles of codimension $e$ only.
However, we cannot prove the absolute convergence.
Assuming the absolute convergence, we prove the modularity by induction on $r$ by the methods of \cite{YZZ}.
(For details, see Remark \ref{Kudla}.)

\subsection{Special cycles on Shimura varieties of orthogonal type}
\label{Subsection:Special cycles}

Before giving the statement of our results,
we briefly recall the setting of Kudla \cite{OrthogonalKudla}, \cite{Remarks} and Rosu-Yott \cite{RosuYott}.

Let $d$ and $e$ be positive integers satisfying $1\leq e<d$.
Let $F$ be a totally real field of degree $d$ with real embeddings $\sigma_1,\dots,\sigma_d$.
Let $V$ be a non-degenerate quadratic space of dimension $n+2$ over $F$ whose signature is $(n,2)$ at $\sigma_1,\dots,\sigma_e$ and $(n+2,0)$ at $\sigma_{e+1},\dots,\sigma_d$.
We put $V_{\sigma_i,\C}\defeq V\otimes_{F,\sigma_i}\C$ and $\mathbb{P}(V_{\sigma_i,\C})\defeq(V_{\sigma_i,\C}\backslash \{0\})/\C^{\times}$.
Let $D_i \subset \mathbb{P}(V_{\sigma_i,\C})$ be the Hermitian symmetric domain defined as follows:
\[D_i\defeq\{v \in V_{\sigma_i,\C}\backslash \{0\} \mid \langle v,v \rangle=0, \ \langle v,\bar{v} \rangle<0\}/\C^\times \quad (1\leq i\leq e).\]
We put $D\defeq D_1\times \dots \times D_e$.
Let $\GSpin(V)$ be the general spin group of $V$ over $F$, which is a connected reductive group over $F$.
We put $G\defeq\Res_{F/\Q}\GSpin(V)$ and consider the Shimura varieties associated with $(G,D)$.
Then, for any open compact subgroup $K_f\subset G(\A_f)$, the Shimura datum ($G$,$D$) gives a Shimura variety $M_{K_f}$ over $\C$, whose $\C$-valued points are given as follows:
\[M_{K_f}(\C)=G(\Q) \backslash (D \times G(\A_f))/K_f.\]
Here $\A_f$ is the ring of finite ad\`{e}les of $\Q$.
We remark that $M_{K_f}$ has a canonical model  over a number field called the reflex field.
Hence $M_{K_f}$ is canonically defined over $\overline{\Q}$.
In this paper, $\overline{\Q}$ is an algebraic closure of $\Q$ embedded in $\C$.
By abuse of notation, in this paper, the canonical model of $M_{K_f}$ over $\overline{\Q}$ is also denoted by the same symbol $M_{K_f}$.
Then the Shimura variety $M_{K_f}$ is a projective variety over $\overline{\Q}$ since $1\leq e<d$.
It is a smooth variety over $\overline{\Q}$ if $K_f$ is sufficiently small.

For $i=1,\dots,e$, let $\L_i\in \Pic(D_i)$ be the line bundle which is the restriction of $\mathscr{O}_{\mathbb{P}(V_{\sigma_i,\C})}(-1)$ to $D_i$.
By pulling back to $D$, we get $p^*_i\L_i\in \Pic(D)$,
where $p_i\colon D \to D_i$ are the projection maps.
These line bundles descend to
$\L_{K_f,i} \in \Pic(M_{K_f})\otimes_{\Z}\Q$ and thus we obtain
$\L\defeq\L_{K_f,1}\otimes\dots\otimes\L_{K_f,e}$ on $M_{K_f}$.

We shall define special cycles following Kudla \cite{OrthogonalKudla}, \cite{Remarks} and Rosu-Yott \cite{RosuYott}.
Let $W\subset V$ be a totally positive subspace over $F$.
We denote $G_W\defeq\Res_{F/\Q}\GSpin(W^\perp)$.
Let $D_W\defeq D_{W,1}\times\dots\times D_{W,e}$ be the Hermitian
symmetric domain associated with $G_W$, where
\[D_{W,i}\defeq\{w \in D_i \mid \forall v \in W_{\sigma_i}, \ \langle v,w \rangle=0\} \quad (1\leq i\leq e).\]
Then we have an embedding of Shimura data $(G_W,D_W)\hookrightarrow (G,D)$.
For any open compact subgroup $K_f \subset G(\A_f)$ and $g \in G(\A_f)$,
we have an associated Shimura variety $M_{gK_fg^{-1},W}$ over $\C$:
\[M_{gK_fg^{-1},W}(\C)=G_W(\Q) \backslash (D_W \times G_W(\A_f))/(gK_fg^{-1} \cap G_W(\A_f)).\]
Assume that $K_f$ is neat so that the following morphism
\begin{align*}
M_{gK_fg^{-1},W}(\C) &\to M_{K_f}(\C)\\
[\tau,h]&\mapsto [\tau,hg]
\end{align*}
is a closed embedding \cite[Lemma 4.3]{Remarks}.
Let $Z(W,g)_{K_f}$ be the image of this morphism.
We consider $Z(W,g)_{K_f}$ as an algebraic cycle of codimension $e\dim_FW$ on $M_{K_f}$ defined over $\overline{\Q}$.

For any positive integer $r$ and $x=(x_1,\dots,x_r) \in V^r$, let $U(x)$ be the $F$-subspace of $V$ spanned by $x_1,\dots,x_r$.
We define the \textit{special cycle} in the Chow group
\[Z(x,g)_{K_f} \in \CH^{er}(M_{K_f})_{\C}\defeq\CH^{er}(M_{K_f})\otimes_{\Z}\C\]
by
\[Z(x,g)_{K_f}\defeq Z(U(x),g)_{K_f}(\mathrm{c}_1(\L^{\vee}_{K_f,1})\cdots\mathrm{c}_1(\L^{\vee}_{K_f,e}))^{r-\dim U(x)}\]
if $U(x)$ is totally positive.
Otherwise, we put $Z(x,g)_{K_f}\defeq 0$.

For a Bruhat-Schwartz function $\phi_f\in \S(V(\A_f)^r)^{K_f}$, \textit{Kudla's generating function} is defined to be the following formal power series with coefficients in $\CH^{er}(M_{K_f})_{\C}$ in the variable $\tau=(\tau_1,\dots,\tau_d)\in (\H_r)^d$:
\[Z_{\phi_f}(\tau)\defeq\sum_{x\in G(\Q)\backslash V^r}\sum_{g\in G_x(\A_f)\backslash G(\A_f)/K_f}\phi_f(g^{-1}x)Z(x,g)_{K_f}q^{T(x)}.\]
Here $G_x\subset G$ is the stabilizer of $x$, $\H_r$ is the Siegel upper half plane of genus $r$, $T(x)$ is the moment matrix $\frac{1}{2}((x_i,x_j))_{i,j}$, and
\[q^{T(x)}\defeq\exp(2\pi \sqrt{-1} \sum_{i=1}^d \Tr{\tau_iT(x)^{\sigma_i}}).\]

For a $\C$-linear map $\ell\colon\CH^{er}(M_{K_f})_{\C}\to \C$, we put
\[\ell(Z_{\phi_f})(\tau)\defeq\sum_{x\in G(\Q)\backslash V^r}\sum_{g\in G_x(\A_f)\backslash G(\A_f)/K_f}\phi_f(g^{-1}x)\ell(Z(x,g)_{K_f})q^{T(x)},\]
which is a formal power series with complex coefficients in the variable $\tau\in(\H_r)^d$
\subsection{The Beilinson-Bloch conjecture}
\label{Subsection:BB conjecture}

In 1980's, Beilinson and Bloch formulated a series of influential conjectures on algebraic cycles.
We review the statement of a part of the Beilinson-Bloch conjecture which is needed in the main theorem of this paper.
Our main reference is \cite{Beilinson}.
More generally, the Beilinson-Bloch conjecture is formulated in the theory of mixed motives, but we do not need the full version and need only a part of it for smooth projective varieties over number fields.
We recommend \cite{Jannsen} to the readers who want to know the Beilinson-Bloch conjecture in the theory of mixed motives.

In this subsection, let $k$ be a field of characteristic 0 embedded in $\C$.
Let $X$ be a smooth projective variety over $k$.
Let
\[cl^m\colon\CH^m(X)\to H^{2m}(X,\Q)\defeq H^{2m}(X(\C),\Q)\]
be the cycle map.
We put $\CH^m_{\mathrm{hom}}(X)\defeq\Ker(cl^m)$.

The following is a generalization of the Birch and Swinnerton-Dyer conjecture.
\begin{conj}(Beilinson-Bloch conjecture \cite[Conjecture 5.0]{Beilinson})
\label{Beilinson-Bloch1}
Assume that $k$ is a number field.
Then the group $\CH^m_{\mathrm{hom}}(X)$ is finitely generated and the rank of $\CH^m_{\mathrm{hom}}(X)$ is equal to the order of zero of the Hasse-Weil $L$-function  $L(H^{2m-1}_{\acute{e}t}(X\otimes_{k}\overline{k},\Q_{\ell}),s)$ at $s=m$ for any prime $\ell$.
\end{conj}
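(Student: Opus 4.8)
The statement recalled above is the Beilinson--Bloch conjecture, one of the central open problems in arithmetic geometry and a direct generalisation of the Birch and Swinnerton-Dyer conjecture (the case $m=1$, $X$ a curve, via $\CH^1_{\mathrm{hom}}(X)\cong\Pic^0(X)(k)$ and $H^1_{\acute{e}t}$ of the Jacobian). Accordingly I cannot offer a genuine proof; what follows describes the shape any proof must take and the obstructions that keep it out of reach, which is precisely why the present paper only \emph{assumes} the conjecture — in the weakened form of injectivity of the higher Abel--Jacobi map — rather than establishing it.

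A proof would split into two independent parts. First, the finite generation of $\CH^m_{\mathrm{hom}}(X)$ over a number field: this is expected to follow from a Mordell--Weil type theorem for (a suitable quotient of) the intermediate Jacobian $J^m(X)$ together with finiteness of a Tate--Shafarevich analogue, but for $m\geq 2$ no such statement is known in any generality. Second, granting finite generation, one must identify $\mathrm{rk}\,\CH^m_{\mathrm{hom}}(X)$ with $\ord_{s=m}L(H^{2m-1}_{\acute{e}t}(X\otimes_k\overline{k},\Q_\ell),s)$. The lower bound would be obtained by producing enough homologically trivial algebraic cycles whose $\ell$-adic Abel--Jacobi images span a space of the predicted dimension, via a Gross--Zagier / Beilinson--Bloch height formula relating heights of such cycles (for instance Heegner-type cycles on Shimura varieties) to central derivatives of automorphic $L$-functions; the matching upper bound would come from an Euler system / Kolyvagin-type argument bounding the Bloch--Kato Selmer group $H^1_f(k,H^{2m-1}_{\acute{e}t}(X\otimes_k\overline{k},\Q_\ell)(m))$ in terms of the $L$-value, the assumed injectivity of the Abel--Jacobi map being what transports information from Selmer groups back to Chow groups.

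The main obstacle is that essentially every ingredient is itself open: finite generation of higher Chow groups is unknown, injectivity of the higher Abel--Jacobi map is exactly the hypothesis one is forced to assume here, general higher-codimension height formulas are unavailable, and Euler systems exist for only a short list of motives. Thus the realistic outcome is conditional and partial — one proves instances of the conjecture for motives attached to automorphic representations for which both a height formula and an Euler system are in hand (as in work on Gan--Gross--Prasad cycles) — and it is in this black-box sense that the conjecture enters the modularity arguments of the present paper.
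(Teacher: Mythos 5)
You are right that this statement is not proved in the paper and cannot be: it is Beilinson's conjecture itself, recalled verbatim (with a citation to \cite{Beilinson}) purely as a hypothesis motivating Conjecture \ref{Beilinson-Bloch3}, which is the form actually assumed in Theorems \ref{MainTheorem1} and \ref{MainTheorem2}. Your assessment — that no proof exists, that the paper only invokes the injectivity of the higher Abel--Jacobi map as a black box, and your sketch of what a proof would require — is accurate and consistent with how the paper treats the statement.
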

We recall another conjecture which is also considered as a part of the Beilinson-Bloch conjecture.
By Hodge theory, we have the Hodge decomposition
\[H^r(X,\C)=\bigoplus_{p+q=m}H^{p,q},\]
where $H^{p,q}\defeq H^q(X,\Omega^p)$ and a Hodge filtration $\bigl\{F^iH^m\bigl\}_{i=0}^m$ on $H^m$ is defined by
\[F^iH^m\defeq\bigoplus_{p\geq i}H^{p,m-p}.\]
The \textit{the $m$-th intermediate Jacobian of X} (or \textit{the Griffiths Jacobian of X}) is defined by
\[J^{2m-1}(X)\defeq H^{2m-1}(X,\C)/(F^mH^{2m-1}(X,\C)\oplus H^{2m-1}(X,\Z(m))).\]
Then we have \textit{the $m$-th higher Abel-Jacobi map}:
\[AJ^m\colon\CH^m_{\mathrm{hom}}(X)_{\Q}\defeq\CH^m_{\mathrm{hom}}(X)\otimes_{\Z}\Q \to J^{2m-1}(X)_{\Q}\defeq J^{2m-1}(X)\otimes_{\Z}{\Q}.\]

Here we can state another conjecture which is a part of version of the Beilinson-Bloch conjecture.
\begin{conj}(Beilinson-Bloch conjecture \cite[Lemma 5.6]{Beilinson})
\label{Beilinson-Bloch2}
The $m$-th higher Abel-Jacobi map $AJ^m$ is injective.
\end{conj}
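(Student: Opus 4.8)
Injectivity can hold only when the source is arithmetically rigid, so the first task is to fix the right setting. Over $k=\C$ the statement is false: Mumford's theorem produces smooth projective surfaces for which $\CH^2_{\mathrm{hom}}(X)_{\Q}$ is infinite-dimensional, whereas the target $J^3(X)_{\Q}$ is a fixed finite-dimensional $\Q$-vector space. The content therefore lives over number fields, and in the application one first descends the Shimura variety and its cycles to the reflex field. The plan is then to exploit the conjectural finiteness of $\CH^m_{\mathrm{hom}}(X)_{\Q}$ supplied by Conjecture \ref{Beilinson-Bloch1}, together with Galois descent. When $m=1$ there is nothing to prove: $AJ^1$ is the classical Abel-Jacobi map, which identifies $\CH^1_{\mathrm{hom}}(X)_{\Q}$ with $\Pic^0(X)(k)_{\Q}$ and sends it isomorphically onto $J^1(X)_{\Q}$. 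All the difficulty is concentrated in the range $m\geq 2$.

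For $m\geq 2$ the plan is to realize $AJ^m$ as the first nontrivial graded piece of a conjectural Bloch-Beilinson filtration $F^\bullet$ on $\CH^m(X)_{\Q}$, normalized so that $F^0=\CH^m(X)_{\Q}$, $F^1=\CH^m_{\mathrm{hom}}(X)_{\Q}$, and so that $AJ^m$ factors through an injection on $F^1/F^2$. Granting such a finite filtration, injectivity of $AJ^m$ is precisely the vanishing $F^2\CH^m(X)_{\Q}=0$, since $\Ker(AJ^m)=F^2\CH^m(X)_{\Q}$ by construction. I would deduce this vanishing from the motivic description of the graded pieces: in a Tannakian category $\mathcal{MM}_k$ of mixed motives over $k$ one conjecturally has
\[\mathrm{Gr}^\nu_F\,\CH^m(X)_{\Q}\;\cong\;\mathrm{Ext}^\nu_{\mathcal{MM}_k}\bigl(\Q(0),\,h^{2m-\nu}(X)(m)\bigr),\]
and over a number field these higher extension groups are expected to vanish for $\nu\geq 2$, since $\mathcal{MM}_k$ should have cohomological dimension one. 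Feeding the vanishing for $\nu\geq 2$ back into the filtration gives $F^2=0$, hence the injectivity.

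The step I expect to be genuinely unreachable is the construction and control of this filtration. The category $\mathcal{MM}_k$ equipped with the required $t$-structure and realization functors is not known to exist unconditionally, so the very definition of $\mathrm{Gr}^\nu_F$ rests on the conjectural Beilinson-Deligne formalism; and even granting the category, the cohomological-dimension-one vanishing $\mathrm{Ext}^{\nu}_{\mathcal{MM}_k}=0$ for $\nu\geq 2$ is a deep conjecture, tightly bound up with the finiteness and $L$-value predictions of Conjecture \ref{Beilinson-Bloch1} and with Beilinson-Soulé vanishing. A second, equally conjectural route would detect $\Ker(AJ^m)$ arithmetically through the $\ell$-adic Abel-Jacobi map into $H^1\bigl(k, H^{2m-1}_{\acute{e}t}(X_{\overline{k}}, \Q_\ell(m))\bigr)$ and argue that a nonzero class would violate the expected injectivity of the $\ell$-adic regulator; but this only transports the difficulty into the Bloch-Kato Selmer-group conjectures. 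For these reasons an unconditional proof is not within reach, which is exactly why the statement is invoked below as a hypothesis rather than established.
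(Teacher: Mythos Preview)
The statement is recorded in the paper as a \emph{conjecture}, not a theorem; the paper provides no proof and makes no attempt at one, using it purely as a standing hypothesis (and in fact only in the weakened form of Conjecture \ref{Beilinson-Bloch3}). Your write-up reaches the same conclusion---that no unconditional proof is available and that the statement must be assumed---so in that sense your assessment agrees exactly with the paper's treatment.

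Where you go further is in sketching the conjectural motivic framework (Bloch--Beilinson filtration, $\mathrm{Ext}$-groups in $\mathcal{MM}_k$, cohomological dimension one) that would explain \emph{why} one expects injectivity over number fields. That discussion is accurate as heuristics, and you are right to flag Mumford's counterexample over $\C$ and the unconditional $m=1$ case. But none of this constitutes a proof, and you acknowledge as much. There is therefore nothing to compare: the paper offers no argument, and your proposal correctly identifies the statement as presently out of reach.
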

Conjecture \ref{Beilinson-Bloch1} or Conjecture \ref{Beilinson-Bloch2} suggests the following is true.
Recall that $\overline{\Q}$ is an algebraic closure of $\Q$ embedded in $\C$.
\begin{conj}
\label{Beilinson-Bloch3}
Let $X$ be a smooth projective variety over $\overline{\Q}$.
If $H^{2m-1}(X,\Q)=0$, then $\CH^m_{\mathrm{hom}}(X)_{\Q}=0$.
In particular, the cycle map tensored with $\Q$
\[cl^m_{\Q}\colon\CH^m(X)_{\Q}\defeq\CH^m(X)\otimes_{\Z}\Q\to H^{2m}(X,\Q)\]
is injective.
\end{conj}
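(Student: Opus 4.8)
The plan is to deduce Conjecture~\ref{Beilinson-Bloch3} formally from the conjectures recalled above; I would give two independent arguments, one from Conjecture~\ref{Beilinson-Bloch2} and one from Conjecture~\ref{Beilinson-Bloch1}. In both cases the hypothesis $H^{2m-1}(X,\Q)=0$ is fed into the relevant ``target'' of the Beilinson--Bloch machinery to force it to vanish.

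The short route uses Conjecture~\ref{Beilinson-Bloch2}, which applies verbatim with $k=\overline{\Q}$. From $H^{2m-1}(X,\Q)=0$ we get $H^{2m-1}(X,\C)=H^{2m-1}(X,\Q)\otimes_{\Q}\C=0$, so $J^{2m-1}(X)$, being a quotient of $H^{2m-1}(X,\C)$, is zero, and hence so is $J^{2m-1}(X)_{\Q}$. The higher Abel--Jacobi map $AJ^m\colon\CH^m_{\mathrm{hom}}(X)_{\Q}\to J^{2m-1}(X)_{\Q}$ is then a homomorphism into the zero group; as it is injective by Conjecture~\ref{Beilinson-Bloch2}, we conclude $\CH^m_{\mathrm{hom}}(X)_{\Q}=0$.

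The second route uses Conjecture~\ref{Beilinson-Bloch1}, which is stated only over number fields, so I would first spread $X$ out: pick a number field $k_0$ and a smooth projective model $X_0$ over $k_0$ with $X\cong X_0\otimes_{k_0}\overline{\Q}$. Writing $\overline{\Q}$ as the filtered union of the finite extensions $k$ of $k_0$ it contains, one has $\CH^m(X)=\varinjlim_k\CH^m(X_0\otimes_{k_0}k)$; since all the $X_0\otimes_{k_0}k$ have the same base change to $\C$ and compatible cycle class maps, one also has $\CH^m_{\mathrm{hom}}(X)=\varinjlim_k\CH^m_{\mathrm{hom}}(X_0\otimes_{k_0}k)$. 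By smooth proper base change and Artin's comparison theorem, $H^{2m-1}_{\acute{e}t}\bigl((X_0\otimes_{k_0}k)\otimes_k\overline{k},\Q_{\ell}\bigr)\cong H^{2m-1}(X(\C),\Q)\otimes_{\Q}\Q_{\ell}=0$, so the corresponding Hasse--Weil $L$-function is the empty Euler product $1$ and has no zero at $s=m$. Conjecture~\ref{Beilinson-Bloch1} then forces each $\CH^m_{\mathrm{hom}}(X_0\otimes_{k_0}k)$ to be finitely generated of rank $0$, hence torsion, hence killed by $\otimes_{\Z}\Q$; passing to the colimit gives $\CH^m_{\mathrm{hom}}(X)_{\Q}=0$ once more.

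For the last assertion, applying exactness of $-\otimes_{\Z}\Q$ to $0\to\CH^m_{\mathrm{hom}}(X)\to\CH^m(X)\xrightarrow{cl^m}H^{2m}(X,\Q)$ identifies $\Ker(cl^m_{\Q})$ with $\CH^m_{\mathrm{hom}}(X)_{\Q}$, which is $0$ by the above, so $cl^m_{\Q}$ is injective. Since everything here is a formal consequence of the Beilinson--Bloch conjectures, there is no genuine obstacle; the only bookkeeping that needs care is, in the second argument, the spreading-out step and the identification $\CH^m_{\mathrm{hom}}(X)=\varinjlim_k\CH^m_{\mathrm{hom}}(X_0\otimes_{k_0}k)$, and, in the first, checking that $J^{2m-1}(X)$ really vanishes rather than merely being torsion.
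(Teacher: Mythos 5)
Your proposal is correct and matches the paper's intent exactly: the paper states Conjecture \ref{Beilinson-Bloch3} without proof, remarking only that Conjecture \ref{Beilinson-Bloch1} or Conjecture \ref{Beilinson-Bloch2} ``suggests'' it, and your two arguments are precisely the deductions being alluded to (vanishing of the intermediate Jacobian $J^{2m-1}(X)$ in the first case; spreading out to a number field and the order of vanishing of the $L$-function of the zero Galois representation in the second). Both routes are sound, as is the flatness argument identifying $\Ker(cl^m_{\Q})$ with $\CH^m_{\mathrm{hom}}(X)_{\Q}$.
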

\begin{rem}
\label{Remark:r=1}
When $m=1$ and $X$ is a smooth projective curve over $\C$, the map $AJ^1$ is the usual Abel-Jacobi map, so we get an isomorphism between the Picard group and the Jacobian.
See \cite[Section 1.4]{Jannsen}.
From this, it is easy to see that Conjecture \ref{Beilinson-Bloch2} and Conjecture \ref{Beilinson-Bloch3} are true when $m=1$.
\end{rem}


\subsection{Main results}
Let notation be as in Section \ref{Subsection:Special cycles}.

If $n\geq 3$, our main result in this paper is:
\begin{thm}
\label{MainTheorem1}
Assume $n\geq 3$ and Conjecture \ref{Beilinson-Bloch3} for the Shimura variety $M_{K_f}$ for $m=e$.
Let $r\geq 1$ be a positive integer.
\begin{enumerate}
\item If $\ell\colon\CH^{er}(M_{K_f})_{\C}\to \C$ is a linear map over $\C$ such that $\ell(Z_{\phi_f})(\tau)$ is absolutely convergent, then $\ell(Z_{\phi_f})(\tau)$ defines a Hilbert-Siegel modular form of genus $r$ and weight $1+n/2$.

\item If $r=1$, for any linear map $\ell\colon\CH^{e}(M_{K_f})_{\C}\to \C$, the formal power series $\ell(Z_{\phi_f})(\tau)$ is absolutely convergent and we get a Hilbert modular form of weight $1+n/2$.
\end{enumerate}
\end{thm}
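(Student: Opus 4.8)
The plan is to combine the known modularity of the generating series in \emph{cohomology} with Conjecture \ref{Beilinson-Bloch3} and an induction on the genus $r$ following the criterion of Wei Zhang \cite{Thesis} and of Yuan--Zhang--Zhang \cite{YZZ}. Write $cl^{er}(Z_{\phi_f})(\tau)$ for Kudla's generating function with each coefficient $Z(x,g)_{K_f}$ replaced by its class in $H^{2er}(M_{K_f},\C)$; by Kudla \cite{Remarks} and Rosu--Yott \cite{RosuYott} this is a Hilbert--Siegel modular form of genus $r$ and weight $1+n/2$. I would deduce assertion (2) by transporting this statement through the cycle map, and then assertion (1) by induction on $r$ with (2) as the base case.

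\emph{Proof of (2).} First, since $n\geq 3$ and $e<d$, one has $H^{2e-1}(M_{K_f},\Q)=0$: by the automorphic description of the cohomology of $M_{K_f}$, together with the vanishing of relative Lie algebra cohomology of $\mathfrak{so}(n,2)$ in degree $1$ for $n\geq 3$, a degree count over the $e$ Hermitian factors rules out the odd degree $2e-1$. (This is the role of the hypothesis $n\geq 3$.) Granting Conjecture \ref{Beilinson-Bloch3} for $m=e$, we then get $\CH^{e}_{\mathrm{hom}}(M_{K_f})_{\Q}=0$, so that the cycle map $cl^{e}_{\C}\colon\CH^{e}(M_{K_f})_{\C}\to H^{2e}(M_{K_f},\C)$ is injective. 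Hence any $\C$-linear map $\ell\colon\CH^{e}(M_{K_f})_{\C}\to\C$ factors as $\ell=\widetilde{\ell}\circ cl^{e}_{\C}$ for a suitable functional $\widetilde{\ell}$ on the finite-dimensional space $H^{2e}(M_{K_f},\C)$, obtained by extending the functional that $\ell$ induces on the image of $cl^{e}_{\C}$. Therefore $\ell(Z_{\phi_f})(\tau)=\widetilde{\ell}\bigl(cl^{e}(Z_{\phi_f})(\tau)\bigr)$ which, by the genus-$1$ case above, is a Hilbert modular form of weight $1+n/2$; in particular the series converges absolutely, proving (2).

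\emph{Proof of (1).} I would induct on $r$, the case $r=1$ being (2). Assume the assertion in genus $r-1$ for $M_{K_f}$, fix $\ell$ with $\ell(Z_{\phi_f})(\tau)$ absolutely convergent, and by linearity reduce to $\phi_f=\phi_f'\otimes\phi_f''$ with $\phi_f'\in\S(V(\A_f)^{r-1})^{K_f}$ and $\phi_f''\in\S(V(\A_f))^{K_f}$. By Wei Zhang's modularity criterion it is enough to check: (i) invariance of $\ell(Z_{\phi_f})(\tau)$ under $\tau\mapsto\tau+B$ for symmetric $B$ over $\mathcal{O}_F$, which is immediate from the shape of $q^{T(x)}$; (ii) invariance under the Levi $\GL_{r}(\mathcal{O}_F)$ of the Siegel parabolic of $\Sp_{2r}$, which follows from a Weil-representation computation together with the linear invariance of $Z(x,g)_{K_f}$ in $x$ (the metaplectic cocycle entering when $n$ is odd); and (iii) that, after splitting $\tau$ into a genus-$(r-1)$ block, a genus-$1$ block, and an off-diagonal part $z$, and writing $x=(x',x_r)$, the Fourier--Jacobi coefficient obtained by collecting the terms with a fixed value of $\tfrac12(x_r,x_r)$ is a Jacobi form of weight $1+n/2$ in the genus-$(r-1)$ and $z$ variables. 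For (iii), Kudla's product relation $Z(x,g)_{K_f}=Z(x',g)_{K_f}\cdot Z(x_r,g)_{K_f}$ in $\CH^{er}(M_{K_f})_{\C}$ identifies that coefficient, up to explicit theta factors, with a sum over $x_r$ of $\ell\bigl(Z(x_r,g)_{K_f}\bigr)$ times the genus-$(r-1)$ generating series attached to $\phi_f'$ (twisted by $x_r$ and $z$); invoking the inductive hypothesis in genus $r-1$ and assertion (2) in genus $1$ --- both on $M_{K_f}$ itself, so that only Conjecture \ref{Beilinson-Bloch3} for $m=e$ on $M_{K_f}$ is used --- and the Fourier--Jacobi formalism exactly as in \cite{YZZ} then gives (iii).

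\emph{The main obstacle.} I expect step (iii) to be the crux: extracting the product relation with full control on the theta factors, so that the Fourier--Jacobi coefficient genuinely decomposes as a genus-$(r-1)$ modular form times a genus-$1$ Jacobi series, and carrying the Weil-representation and metaplectic bookkeeping through the (half-)integral weight $1+n/2$. The second essential point --- and the reason (1) must \emph{assume} absolute convergence rather than prove it --- is propagating that convergence to the genus-$(r-1)$ series and to the individual Fourier--Jacobi pieces, so that all the term-by-term rearrangements above are legitimate; this is precisely the part of \cite{YZZ} that is most delicate.
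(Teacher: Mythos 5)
Your proposal follows essentially the same route as the paper: part (2) is proved exactly as in its Section 3 (vanishing of $H^{2e-1}(M_{K_f},\C)$ via the Matsushima formula and $(\g,K)$-cohomology, injectivity of the cycle map from Conjecture \ref{Beilinson-Bloch3}, then Kudla's and Rosu--Yott's cohomological modularity), and part (1) is the same induction on $r$ via invariance under generators of $\Sp_{2r}(F)$, your steps (i)--(ii) being the paper's Siegel-parabolic computation and your step (iii) being its $w_1$-invariance, established through the intersection and pull-back formulas plus Poisson summation. The one caution is that over a totally real field step (iii) should be phrased as the Yuan--Zhang--Zhang generator argument rather than as an appeal to Wei Zhang's modularity criterion, since the proof of the latter is specific to $\Q$ (it relies on Borcherds' results), as the paper itself points out.
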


If $n\leq 2$, we need to embed $M_{K_f}$ into a larger Shimura variety.
Let $W$ be a totally positive quadratic space of dimension $\geq 3$ over $F$ and we put $G'\defeq\Res_{F/\Q}\GSpin(V\oplus W)$.
We may assume there is an open compact subgroup $K'_f\subset G'(\A_f)$ such that $K_f=K'_f\cap G(\A_f)$.
Let $M'_{K'_f}$ be the Shimura variety associated with $G'$ and $K'_f$ defined over $\overline{\Q}$.
Then we have an embedding of Shimura varieties $M_{K_f}\hookrightarrow M'_{K'_f}$ defined over $\overline{\Q}$.

\begin{thm}
\label{MainTheorem2}
Assume $n\leq 2$ and Conjecture \ref{Beilinson-Bloch3} for the larger Shimura variety $M'_{K'_f}$ for $m=e$.
Let $r\geq 1$ be a positive integer.
\begin{enumerate}
\item If $\ell\colon\CH^{er}(M_{K_f})_{\C}\to \C$ is a linear map over $\C$ such that $\ell(Z_{\phi_f})(\tau)$ is absolutely convergent, then $\ell(Z_{\phi_f})(\tau)$ defines a Hilbert-Siegel modular form of genus $r$ and weight $1+n/2$.

\item If $r=1$, for any linear map $\ell\colon\CH^{e}(M_{K_f})_{\C}\to \C$, the formal power series $\ell(Z_{\phi_f})(\tau)$ is absolutely convergent and we get a Hilbert modular form of weight $1+n/2$.
\end{enumerate}
\end{thm}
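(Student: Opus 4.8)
The plan is to deduce Theorem~\ref{MainTheorem2} from Theorem~\ref{MainTheorem1} applied to the larger Shimura variety $M'_{K'_f}$, by pulling Kudla's generating function back along the closed embedding $i\colon M_{K_f}\hookrightarrow M'_{K'_f}$. Since $W$ is totally positive with $\dim_F W\geq 3$, the quadratic space $V\oplus W$ has signature $(n',2)$ at $\sigma_1,\dots,\sigma_e$ and $(n'+2,0)$ at $\sigma_{e+1},\dots,\sigma_d$, where $n'\defeq n+\dim_F W\geq 3$; and the hypothesis of Theorem~\ref{MainTheorem2} is exactly Conjecture~\ref{Beilinson-Bloch3} for $M'_{K'_f}$ with $m=e$. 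Hence Theorem~\ref{MainTheorem1} is available for $M'_{K'_f}$, giving modularity of its generating functions of genus $r$ and weight $1+n'/2$.

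The crux is the following compatibility. For $y\in(V\oplus W)^r$ write $y_0\defeq\pi_V(y)$ for the orthogonal projection to $V$, so that $U(y)+W=U(y_0)\oplus W$. Then $M_{K_f}$, regarded via $i$ as the special cycle $Z(W,1)_{K'_f}$ on $M'_{K'_f}$, meets $Z(U(y),h)_{K'_f}$ along $Z(U(y_0),g)_{K_f}$ with excess normal bundle $\bigoplus_{i=1}^e(\L^{\vee}_{K_f,i})^{\oplus(\dim_F U(y)-\dim_F U(y_0))}$; since $i^*\L_{K'_f,i}=\L_{K_f,i}$, the excess-intersection formula together with the definition of $Z(x,g)_{K_f}$ gives, after combining the Chern-class powers, $i^*Z(y,h)_{K'_f}=Z(y_0,g)_{K_f}$ in $\CH^{er}(M_{K_f})_{\C}$. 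Summing this over $y$ and $h$ against a suitable $K'_f$-invariant test function $\phi'_f$ built from $\phi_f$ and an auxiliary Schwartz function $\psi_f$ on the $W$-directions, and using $T(y)=T(y_0)+T(\pi_W y)$, one obtains the formal identity of $q$-expansions
\[i^*Z'_{\phi'_f}(\tau)=Z_{\phi_f}(\tau)\cdot\theta_{W,\psi_f}(\tau)\qquad(\tau\in(\H_r)^d),\]
where $Z'$ is Kudla's generating function on $M'_{K'_f}$ and $\theta_{W,\psi_f}$ is the genus-$r$ theta series of the totally positive space $W$, a nonzero Hilbert-Siegel modular form of parallel weight $\tfrac12\dim_F W=\tfrac12(n'-n)$.

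Given this, part~(1) follows at once: for $\ell$ with $\ell(Z_{\phi_f})(\tau)$ absolutely convergent, put $\ell'\defeq\ell\circ i^*\colon\CH^{er}(M'_{K'_f})_{\C}\to\C$; then $\ell'(Z'_{\phi'_f})(\tau)=\ell(Z_{\phi_f})(\tau)\,\theta_{W,\psi_f}(\tau)$ is a product of two absolutely convergent series, so by Theorem~\ref{MainTheorem1}(1) for $M'_{K'_f}$ it is a Hilbert-Siegel modular form of genus $r$ and weight $1+n'/2$; dividing by $\theta_{W,\psi_f}$ exhibits $\ell(Z_{\phi_f})(\tau)$ as a meromorphic Hilbert-Siegel modular form of genus $r$ and weight $1+n'/2-\tfrac12(n'-n)=1+n/2$, which is holomorphic on $(\H_r)^d$ by hypothesis, hence a genuine Hilbert-Siegel modular form. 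For part~(2) take $r=1$: here Theorem~\ref{MainTheorem1}(2) for $M'_{K'_f}$ gives unconditionally that $\ell'(Z'_{\phi'_f})(\tau)$ is absolutely convergent and a Hilbert modular form of weight $1+n'/2$, so $\ell(Z_{\phi_f})(\tau)$ is a meromorphic Hilbert modular form of weight $1+n/2$ whose polar set lies in the zero locus of $\theta_{W,\psi_f}$. A theta series of positive weight is never nowhere-vanishing (else its inverse would be a nonzero holomorphic modular form of negative weight), so one must here vary $\psi_f$: since $\ell(Z_{\phi_f})(\tau)$ does not depend on $\psi_f$ while for every point of $(\H_1)^d$ some $K'_f$-invariant $\psi_f$ has $\theta_{W,\psi_f}$ non-vanishing there, $\ell(Z_{\phi_f})(\tau)$ has no poles, is holomorphic and absolutely convergent on $(\H_1)^d$, and is a Hilbert modular form of weight $1+n/2$. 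Note that $W$ may be kept fixed throughout, so only Conjecture~\ref{Beilinson-Bloch3} for the single $M'_{K'_f}$ of the hypothesis is used.

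The hard part will be the compatibility identity above. I expect the real work to lie in computing the Gysin pullback $i^*$ of the special cycles precisely --- carrying out the excess-intersection analysis and checking that the Chern-class normalization built into $Z(x,g)_{K_f}$ matches it --- and then in the bookkeeping of connected components (passing between the double cosets $G'_y(\A_f)\backslash G'(\A_f)/K'_f$ and $G_x(\A_f)\backslash G(\A_f)/K_f$) needed to identify $i^*Z'_{\phi'_f}$ with the product $Z_{\phi_f}\cdot\theta_{W,\psi_f}$. This is the analogue, in the present setting, of computations of Yuan-Zhang-Zhang and of Kudla-Millson; once it is in place, the rest of the reduction is formal.
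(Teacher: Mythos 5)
Your treatment of part (2) is essentially the paper's: the paper likewise embeds $V$ into $V\oplus W$ with $W$ totally positive of dimension $\geq 3$, applies the pull-back formula (Proposition \ref{Prop:pull-back}) to obtain $i^*(Z_{\phi_f\otimes\phi'_f})(g')=Z_{\phi_f}(g')\theta_{\phi'_f}(g')$, and deduces absolute convergence and left $\SL_2(F)$-invariance of $Z_{\phi_f}(g')$ from those of the left-hand side, the latter being supplied by Theorem \ref{MainTheorem1}(2) for $M'_{K'_f}$ (which has $n'=n+\dim_F W\geq 3$ and satisfies the assumed case of Conjecture \ref{Beilinson-Bloch3}); your extra care about the zero divisor of the theta series, resolved by varying $\psi_f$, addresses a point the paper passes over in silence. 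For part (1), however, you take a genuinely different route. The paper does not deduce Theorem \ref{MainTheorem2}(1) from Theorem \ref{MainTheorem1}(1) on the larger variety; it proves parts (1) of both theorems simultaneously by the Yuan--Zhang--Zhang induction on $r$ carried out directly on $M_{K_f}$: term-wise invariance of $Z_{\phi_f}(g')$ under the Siegel parabolic $P(F)$ by a direct Weil-representation computation, and invariance under $w_1$ via the partial Fourier transform and the Poisson summation formula, which reduces to the $r=1$ statements (parts (2) of both theorems) on the sub-Shimura varieties $M_{K_{f,y}}$. Your route --- pull the genus-$r$ series back to $M'_{K'_f}$, apply Theorem \ref{MainTheorem1}(1) there to $\ell\circ i^*$, and divide by a genus-$r$ theta series of $W$ --- is shorter and reuses the same pull-back identity $i^*Z'_{\phi_f\otimes\psi_f}=Z_{\phi_f}\cdot\theta_{W,\psi_f}$, which you rightly single out as the hard part and which is exactly the paper's Proposition \ref{Prop:pull-back} (resting on Proposition \ref{Prop:intersection} and Kudla's normal-bundle computation); the weight bookkeeping $1+n'/2-\tfrac{1}{2}\dim_F W=1+n/2$ is correct, and the Cauchy-product argument shows $\ell'(Z'_{\phi'_f})(\tau)$ is absolutely convergent so that Theorem \ref{MainTheorem1}(1) applies. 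What your route buys is a purely formal deduction of Theorem \ref{MainTheorem2}(1) with no new induction; what it costs is reliance on Theorem \ref{MainTheorem1}(1) for $M'_{K'_f}$ as a black box and the division-by-theta step, where one must ensure $\theta_{W,\psi_f}\not\equiv 0$ and propagate the transformation law across its zero locus by continuity --- both of which you handle adequately. (A shared loose end, not specific to your argument: one should check, e.g.\ after shrinking $K'_f$ or averaging, that $\phi_f\otimes\psi_f$ can be taken $K'_f$-invariant; the paper glosses over this as well.)
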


\begin{rem}
If $\ell$ factors through a linear map $\ell'\colon H^{2er}(M_{K_f},\C)\to\C$, Theorem \ref{MainTheorem1} and Theorem \ref{MainTheorem2} were proved unconditionally by Kudla \cite[Section 5.3]{Remarks} and Rosu-Yott \cite[Theorem 1.1]{RosuYott}.
\end{rem}

\begin{rem}
When $e=1$, we recover the results of Yuan-Zhang-Zhang.
(Note that Conjecture \ref{Beilinson-Bloch3} is true when $m=1$.
See Remark \ref{Remark:r=1}.)
This case is called \textit{Kudla's modularity conjecture}, stated by Kudla in  \cite[Section 3.2, Problem 1]{SpecialKudla} and proved unconditionally by Yuan-Zhang-Zhang in \cite[Theorem 1.2]{YZZ}.
However, they also assumed the absolutely convergence of the generating series for $r>1$.
\end{rem}

\begin{rem}
We do not know the absolute convergence of the generating series  $\ell(Z_{\phi_f})(\tau)$.
When $F=\Q$ and $d=e=1$, Bruinier and Westerholt-Raum proved unconditionally that $\ell(Z_{\phi_f})(\tau)$ is absolutely convergent for any $\ell$ in \cite[Corollary 1.4]{Bruinier}.
\end{rem}

\begin{rem}
\label{Kudla}
In his recent preprint \cite{Remarks}, Kudla proved the absolute convergence and the modularity of generating series in the same setting as ours, assuming Conjecture \ref{Beilinson-Bloch3} for Shimura varieties associated with quadratic spaces of sufficiently large rank.
\end{rem}
\subsection{Outline of the proof of Theorem \ref{MainTheorem1} and Theorem \ref{MainTheorem2}}
We mostly follow the strategy of Yuan-Zhang-Zhang \cite{YZZ}.
However, we have to treat higher codimensional cycles rather than 1 even in the case of $r=1$ different from \cite{YZZ}, so we need algebraic geometrical consideration, such as the Beilinson-Bloch conjecture.

First, we shall prove Theorem \ref{MainTheorem1} (2).
To prove Theorem \ref{MainTheorem1} (2), we calculate the cohomology of the Shimura variety $M_{K_f}$.
By the Matsushima formula, we conclude
\[H^{2e-1}(M_{K_f},\C)=0.\]
Since we are assuming Conjecture \ref{Beilinson-Bloch3} holds for $M_{K_f}$ and $m=e$, the cycle map tensored with $\C$
\[cl^e_{\C}\colon\CH^e(M_{K_f})_{\C}\to H^{2e}(M_{K_f},\C)\]
is injective.
Hence every $\C$-linear map $\CH^e(M_{K_f})_{\C}\to\C$ is extended to a $\C$-linear map $H^{2e}(M_{K_f},\C)\to\C$.
We can deduce Theorem \ref{MainTheorem1} (2) from the results of Kudla \cite[Section 5.3]{Remarks} and Rosu-Yott \cite[Theorem 1.1]{RosuYott}.

Then we shall prove Theorem \ref{MainTheorem2} (2) by the intersection formula \cite[Proposition 2.6]{YZZ} and the pull-back formula \cite[Proposition 3.1]{YZZ}.

Finally, we deduce Theorem \ref{MainTheorem1} (1) from Theorem \ref{MainTheorem1} (2) and deduce Theorem \ref{MainTheorem2} (1) from Theorem \ref{MainTheorem2} (2).
When $r\geq 2$, we prove Theorem \ref{MainTheorem1} (1) and Theorem \ref{MainTheorem2} (1) by induction on $r$.
We put $J\defeq\begin{pmatrix}
0 & -1_r \\
1_r & 0
\end{pmatrix}
\in\GL_{2r}(F)$.
The symplectic group
\[\Sp_{2r}(F)\defeq\bigg\{ g\in\GL_{2r}(F)
\ \bigg|\ \transp{g}Jg=J \bigg\}\]
is generated by the Siegel parabolic subgroup $P(F)$ and an element $w_1\in\Sp_{2r}(F)$.
Here
\[P(F)\defeq\bigg\{ \begin{pmatrix}
A & B \\
0 & D
\end{pmatrix}\in\Sp_{2r}(F)
\bigg\}\]
and $w_1$ is the image of
$\begin{pmatrix}
0 &-1 \\
1 & 0
\end{pmatrix}$
by the injection
\begin{align*}
\SL_{2}&\hookrightarrow\Sp_{2r} \\
\begin{pmatrix}
a & b \\
c & d
\end{pmatrix}&\mapsto
\begin{pmatrix}
a & 0 & b & 0\\
0 & 1_{r-1} & 0 & 0\\
c & 0 & d & 0\\
0 & 0 & 0 & 1_{r-1}\\
\end{pmatrix}.
\end{align*}
We consider a function $Z_{\phi_f}(g')$ on the metaplectic group $\Mp_{2r}(\A_F)$ which is a lift of $Z_{\phi_f}(\tau)$.
It suffices to prove that the function $Z_{\phi_f}(g')$ is invariant under the action of $P(F)$ and $w_1$.
A direct calculation shows the invariance under the action of an element of $P(F)$.
To prove the invariance under $w_1$, we use the Poisson summation formula to reduce to the case $r=1$.

\subsection{Outline of this paper}
In Section \ref{Section:SomeResults},
we recall some facts about special cycles and Weil representations.
In Section \ref{Section:r=1proof},
we calculate the cohomology of a Shimura variety and prove Theorem \ref{MainTheorem1} (2) and Theorem \ref{MainTheorem2} (2).
Finally, in Section \ref{Section:generalproof},
we complete a proof of Theorem \ref{MainTheorem1} and Theorem \ref{MainTheorem2}.

\section{Special cycles and Weil representations}
\label{Section:SomeResults}

In this section, we recall and extend some properties of special cycles in Chow groups.
We also note about Weil representations since in the proof of our main results, we use the function on $\Mp_{2r}(\A_F)$, the metaplectic double cover of $\Sp_{2r}(\A_F)$, lifting  $Z_{\phi_f}(\tau)$.
For more details, see \cite{YZZ}.

\subsection{Special cycles}
Let $W$ be an $F$-vector subspace of
\[\widehat{V}\defeq V\otimes_{\Q}\A_f.\]
We say $W$ is \textit{admissible} if the restriction of the inner product to $W$ is $F$-valued and totally positive.
We say an element $x=(x_1,\dots,x_r)\in\widehat{V}^r$ is \textit{admissible} if the $F$-subspace of $\widehat{V}$ spanned by $x_1,\dots,x_r$ is admissible.
The following lemma shows admissibility is useful for description of special cycles.

\begin{lem}
\label{Lemma:adm}
An $F$-subspace $W$ of $\widehat{V}$ is admissible if and only if
there exists an $F$-subspace $W'$ of $V$ and $g\in G(\A_f)$ such that
$W=gW'$.
\end{lem}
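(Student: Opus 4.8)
The plan is to prove the two implications separately; the ``if'' part is formal, while the ``only if'' part carries the content. For ``if'', recall that $G(\A_f)=\GSpin(V)(\A_{F,f})$ acts on $\widehat V=V\otimes_\Q\A_f=V\otimes_F\A_{F,f}$ through the quotient $\GSpin(V)\to\SO(V)$, hence by $\A_{F,f}$-linear isometries for the $\A_{F,f}$-valued form. So if $W'\subset V$ is a (totally positive) $F$-subspace with $F$-basis $x_1',\dots,x_r'$ and $g\in G(\A_f)$, then $gx_1',\dots,gx_r'$ is an $F$-basis of $W=gW'$ whose Gram matrix $(\langle gx_i',gx_j'\rangle)_{i,j}=(\langle x_i',x_j'\rangle)_{i,j}$ lies in $M_r(F)$ and is totally positive definite; thus $W$ is admissible.

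For ``only if'', let $W\subset\widehat V$ be admissible, set $r=\dim_F W$, fix an $F$-basis $x=(x_1,\dots,x_r)$, and put $T=(\langle x_i,x_j\rangle)_{i,j}\in M_r(F)$; by admissibility $T$ is totally positive definite, in particular invertible. The first step is to realise $T$ as the Gram matrix of an $r$-tuple lying in $V$ itself. For each finite place $v$ of $F$ the components $x_{1,v},\dots,x_{r,v}\in V_v$ have Gram matrix the image of $T$ in $M_r(F_v)$, which is nondegenerate; so they are $F_v$-linearly independent and span a subspace of $V_v$ isometric to $(W,T)\otimes_F F_v$, i.e.\ $V_v$ represents $(W,T)$. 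At an archimedean place $\sigma_i$ the space $(W,T)$ is positive definite of rank $r$ and $V_{\sigma_i}$ has signature $(n,2)$ for $i\le e$ and $(n+2,0)$ for $i>e$, so $V_{\sigma_i}$ represents $(W,T)$ provided $r\le n$ --- which we may assume, since $V$ contains no totally positive definite subspace of rank $>n$. As the codimension $n+2-r$ is then at least $2$, the local--global principle for representations of quadratic forms over $F$ produces an isometric embedding $\iota\colon(W,T)\hookrightarrow V$; write $x_i'=\iota(x_i)\in V$ and $W'=\iota(W)$, so that $(\langle x_i',x_j'\rangle)_{i,j}=T$.

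The second step is to carry $W'$ onto $W$ inside $\widehat V$ by an element of $G(\A_f)$. At each finite place $v$ the tuples $(x_{i,v}')_i$ and $(x_{i,v})_i$ in $V_v$ have the same Gram matrix $T_v$, so Witt's extension theorem gives $g_v\in\O(V_v)$ with $g_vx_{i,v}'=x_{i,v}$ for all $i$; since $r\le n$, the common orthogonal complement is nondegenerate of dimension $\ge 2$, hence anisotropic in some direction, and after multiplying $g_v$ by a reflection there we may take $g_v\in\SO(V_v)$. For all but finitely many $v$ the vectors $x_{i,v}'$ and $x_{i,v}$ lie in a fixed $\mathcal O_v$-lattice and span $\mathcal O_v$-unimodular sublattices of it with common Gram matrix $T_v\in\GL_r(\mathcal O_v)$, so the integral form of Witt's theorem lets us choose $g_v$ in the special orthogonal group of that lattice; hence $g=(g_v)_v\in\SO(V)(\A_{F,f})$. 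Finally, $\GSpin(V)(\A_{F,f})\to\SO(V)(\A_{F,f})$ is surjective --- Hilbert's Theorem~90 applied to the kernel of $\GSpin(V)\to\SO(V)$, which is an induced torus --- so I lift $g$ to $\tilde g\in\GSpin(V)(\A_{F,f})=G(\A_f)$; as this kernel acts trivially on $V$ we still have $\tilde gx_i'=x_i$ for all $i$, and therefore $\tilde gW'=W$, which completes the argument.

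I expect the main obstacle to be the first step of the ``only if'' direction: upgrading the local representability of $(W,T)$ by the $V_v$ --- automatic at the finite places thanks to admissibility, and at the archimedean places a consequence of total positivity together with the rank bound $r\le n$ --- to a genuine global isometric embedding $W\hookrightarrow V$, which is exactly where the Hasse--Minkowski-type local--global principle for representations of quadratic forms and the restriction $r\le n$ come in. Granting that, the adelic Witt extension and the lift through $\GSpin$ are routine.
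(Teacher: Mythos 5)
The paper's own ``proof'' of this lemma is a one-line citation to [YZZ, Lemma 2.1], and your argument is essentially a reconstruction of that proof: realize the Gram matrix $T$ of $W$ inside $V$ via the Hasse principle for representations of quadratic forms, transport the resulting $W'$ onto $W$ place by place using Witt's extension theorem (with determinant corrected by a reflection in the orthogonal complement, and with integrality at almost all places coming from unimodularity), and finally lift from $\SO(V)(\A_{F,f})$ to $G(\A_f)$ by Hilbert 90, the kernel of $\GSpin(V)\to\SO(V)$ acting trivially on $V$. All of those steps are sound, and you are also right to read ``totally positive'' into the hypothesis on $W'$ (as in YZZ's original statement); without it the ``if'' direction is false as literally written in the paper.

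The genuine gap is the reduction to $r\le n$. Your justification --- ``$V$ contains no totally positive definite subspace of rank $>n$'' --- is a statement about $V$, i.e.\ about the right-hand side of the equivalence; what must be ruled out is an \emph{admissible} $F$-subspace $W\subset\widehat{V}$ of rank $>n$, and admissibility only bounds $\dim_F W$ by $n+2$ (nondegeneracy of $T$ at a single finite place). Excluding $\dim_F W\in\{n+1,\,n+2\}$ is a global assertion: one must show that a totally positive $F$-form $T$ of that rank cannot be represented by $V_v$ at \emph{every} finite $v$. When $e$ is odd this follows from the product formula for Hasse invariants: $V$ and $T\perp(\text{its rank-one complement})$ have the same rank and discriminant and disagree locally at exactly the $e$ archimedean places $\sigma_1,\dots,\sigma_e$, hence must disagree at an odd number (in particular $\geq 1$) of finite places --- this is why the lemma is correct in the YZZ setting $e=1$. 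When $e$ is even the parity argument gives nothing, and indeed for $n\geq 2$ one can prescribe local invariants to build a totally positive $T$ of rank $n+1$ with $T_v$ represented by $V_v$ at every finite $v$; the resulting $W\subset\widehat{V}$ is admissible of rank $n+1$ but cannot equal $gW'$ for any totally positive $W'\subset V$. So either the lemma must carry the hypothesis $\dim_F W\le n$ (which is what is actually needed downstream), or your ``which we may assume'' has to be replaced by the parity argument above, available only for $e$ odd. As written, the proof is incomplete at exactly this point, and the point is not cosmetic: it is precisely where the passage from $e=1$ to general $e$ is delicate.
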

\begin{proof}
See \cite[Lemma 2.1]{YZZ}.
\end{proof}

By the above lemma, for an admissible subspace $W=g^{-1}W'$, we define
$Z(W)_{K_f}\defeq Z(W',g)_{K_f}$.
In the same way, for an admissible element $x=g^{-1}x'$, we write
$Z(x)_{K_f}\defeq Z(x',g)_{K_f}$.
Moreover, for $\tau\in(\H_r)^d$ and $g'\in\Mp_{2r}(\A_F)$, we get the following descriptions:
\begin{align*}
Z_{\phi_f}(\tau)&=\sum_{\substack{x\in K_f\backslash \widehat{V}^r \\ \mathrm{admissible}}}\phi_f(x)Z(x)_{K_f}q^{T(x)}\\
Z_{\phi_f}(g')&=\sum_{\substack{x\in K_f\backslash \widehat{V}^r \\ \mathrm{admissible}}}\omega_f(g_f')\phi_f(x)Z(x)_{K_f}W_{T(x)}(g_{\infty}')
\end{align*}
By \cite[Proposition 2.2]{YZZ}, the scheme-theoretic intersection of
two cycles $Z(W_1)_{K_f}$ and $Z(W_2)_{K_f}$ is the union of $Z(W)$ indexed by
admissible classes $W$ in
\[K_f\backslash (K_fW_1+K_fW_2).\]
We shall investigate the intersection of two cycles in the Chow group.



\begin{prop}
\label{Prop:dimension}
The intersection of two cycles $Z(W_1)_{K_f}$ and $Z(W_2)_{K_f}$ in the Chow group are proper if and only $k_1W_1\cap k_2W_2=0$ for all admissible classes $k_1W_1+k_2W_2$.
\end{prop}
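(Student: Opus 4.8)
The plan is to reduce the assertion to a linear-algebra dimension count, feeding on the description of the scheme-theoretic intersection already recalled from \cite[Proposition 2.2]{YZZ}. First I would record the relevant codimensions: for an admissible $F$-subspace $W$ of $\widehat{V}$, the cycle $Z(W)_{K_f}$ is pushed forward from the Shimura subvariety attached to $\GSpin(W^{\perp})$, and since $W$ is totally positive, $W_{\sigma_i}$ is positive definite and $W^{\perp}_{\sigma_i}$ has signature $(n-\dim_F W,2)$, so each $D_{W,i}$ has complex dimension $n-\dim_F W$; thus $\dim D_W=e(n-\dim_F W)$ against $\dim D=en$, and $Z(W)_{K_f}$ is of pure codimension $e\dim_F W$ in $M_{K_f}$. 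In particular $Z(W_1)_{K_f}$ and $Z(W_2)_{K_f}$ have codimensions $e\dim_F W_1$ and $e\dim_F W_2$ respectively.

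Next I would analyze the components of the intersection. As $K_f$ is neat, $M_{K_f}$ is smooth, so properness concerns the irreducible components of the scheme-theoretic intersection $Z(W_1)_{K_f}\cap Z(W_2)_{K_f}$. By \cite[Proposition 2.2]{YZZ} this intersection is the union of the $Z(W)_{K_f}$ over admissible classes $W\in K_f\backslash(K_fW_1+K_fW_2)$; hence each of its irreducible components is a component of some such $Z(W)_{K_f}$ and therefore has codimension $e\dim_F W$. The Grassmann identity
\[\dim_F W=\dim_F W_1+\dim_F W_2-\dim_F(k_1W_1\cap k_2W_2),\qquad W=k_1W_1+k_2W_2,\]
then shows $e\dim_F W\leq e\dim_F W_1+e\dim_F W_2$ for every such $W$ --- in accordance with the general fact that a component of an intersection on a smooth variety never has codimension exceeding the sum of the two codimensions --- with equality precisely when $k_1W_1\cap k_2W_2=0$.

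It remains to assemble the equivalence: the two cycles intersect properly iff every irreducible component of the intersection has codimension equal to $e\dim_F W_1+e\dim_F W_2$, which by the previous paragraph holds iff $\dim_F(k_1W_1\cap k_2W_2)=0$ for every admissible class $W=k_1W_1+k_2W_2$, i.e. iff $k_1W_1\cap k_2W_2=0$ for all such classes; when no admissible class exists the intersection is empty and both conditions hold vacuously, so the equivalence persists. The only point requiring a little care is pinning down that $Z(W)_{K_f}$ has \emph{pure} codimension $e\dim_F W$ uniformly over its (possibly several) components, so that the codimensions of the components of the intersection can be read off directly from \cite[Proposition 2.2]{YZZ}; beyond that the argument is simply the Grassmann formula, and I do not anticipate a serious obstacle.
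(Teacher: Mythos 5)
Your proposal is correct and follows essentially the same route as the paper: both arguments combine the dimension formula $\dim Z(W)_{K_f}=e(n-\dim_F W)$, the decomposition of the scheme-theoretic intersection into the $Z(W)_{K_f}$ for admissible classes $W\in K_f\backslash(K_fW_1+K_fW_2)$ from \cite[Proposition 2.2]{YZZ}, and the Grassmann identity to compare each component's (co)dimension with the expected one. Your version merely rephrases the count in terms of codimension and adds the (harmless) remarks about purity and the vacuous case.
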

\begin{proof}
We recall that $\dim Z(W_i)_{K_f}=e(n-\dim W_i)$.
The intersection is proper if and only if the following inequality holds:

\begin{eqnarray*}
\dim (Z(W_1)_{K_f}\cap Z(W_2)_{K_f})
&\leq&\dim Z(W_1)_{K_f} + \dim Z(W_2)_{K_f} - \dim M_{K_f}\\
&=&e(n-(\dim W_1+\dim W_2))
\end{eqnarray*}
On the other hand,
\[Z(W_1)_{K_f}\cap Z(W_2)_{K_f}=\sum_{\substack{W\in K_f\backslash (K_fW_1+K_fW_2) \\ \mathrm{admissible}}}Z(W)_{K_f}\]
and
\[\dim Z(k_1W_1+k_2W_2)_{K_f}=e(n-(\dim k_1W_1+\dim k_2W_2)+\dim (k_1W_1\cap k_2W_2)).\]

Therefore the above inequality holds if and only if $k_1W_1\cap k_2W_2=0$ for all admissible classes $k_1W_1+k_2W_2$.

\end{proof}
\begin{prop}
\label{Prop:intersection}
The intersection of two cycles $Z(W_1)_{K_f}$ and $Z(W_2)_{K_f}$ in the Chow group is given by
the sum of $Z(W)_{K_f}$ indexed by
admissible classes $W$ in
\[K_f\backslash (K_fW_1+K_fW_2).\]
\end{prop}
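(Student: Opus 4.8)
The plan is to combine the scheme-theoretic statement already recalled from \cite[Proposition 2.2]{YZZ} with the properness criterion of Proposition \ref{Prop:dimension}, reducing the computation in the Chow group to the case of a proper intersection and then handling the non-proper case by an excess-intersection (moving) argument. First I would recall that, as sets, $Z(W_1)_{K_f}\cap Z(W_2)_{K_f}=\bigcup Z(W)_{K_f}$ over admissible $W$ in $K_f\backslash(K_fW_1+K_fW_2)$, so the underlying cycle-theoretic support is already the correct one; the issue is only multiplicities and the well-definedness of the product class when the intersection fails to be dimensionally transverse. When $k_1W_1\cap k_2W_2=0$ for every admissible class, Proposition \ref{Prop:dimension} guarantees the intersection is proper, and then \cite[Proposition 2.2]{YZZ} identifies the intersection as a disjoint (or at least generically transverse) union of the $Z(W)_{K_f}$, each appearing with multiplicity one because the cycles $Z(W_i)_{K_f}$ are reduced and smooth (for $K_f$ neat) and meet transversally along each component; so in $\CH^\bullet(M_{K_f})_{\C}$ the product is exactly $\sum_W Z(W)_{K_f}$.

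For the general (possibly improper) case, I would pass to a situation where properness holds. The standard device here is the one used by Kudla and by Yuan--Zhang--Zhang: realise $Z(W_1)_{K_f}$ and $Z(W_2)_{K_f}$ as special cycles coming from the \emph{same} ambient $V$ but after replacing one of them by a translate, or by embedding into $M_{K_f}\times M_{K_f}$ via the diagonal and using the fact that $Z(W_1)\times Z(W_2)$ meets the diagonal in the expected way once one perturbs by an element of $G(\A_f)$. Concretely, for $g$ in a dense open subset of $G(\A_f)$ the spaces $W_1$ and $gW_2$ satisfy $k_1W_1\cap k_2gW_2=0$ for all admissible classes, so the intersection $Z(W_1)_{K_f}\cdot Z(gW_2)_{K_f}$ is proper and computed by the previous paragraph; since $Z(gW_2)_{K_f}$ is rationally (indeed algebraically, via the connectedness of $G(\A_f)$ acting through Hecke translates) equivalent to $Z(W_2)_{K_f}$, and since the family of intersections $K_f\backslash(K_fW_1+K_fgW_2)$ specialises to $K_f\backslash(K_fW_1+K_fW_2)$ as $g\to 1$, the class $\sum_W Z(W)_{K_f}$ is independent of the perturbation and equals the intersection product $Z(W_1)_{K_f}\cdot Z(W_2)_{K_f}$ in $\CH^\bullet(M_{K_f})_{\C}$.

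The main obstacle I expect is making the perturbation/specialisation argument rigorous: one must check that the intersection multiplicities are all $1$ even in the limit (equivalently, that no excess-intersection correction terms involving Chern classes of excess normal bundles appear beyond those already encoded in the definition of $Z(x,g)_{K_f}$ via the $\mathrm{c}_1(\L^\vee_{K_f,i})$ factors), and that the assignment $g\mapsto \{$admissible classes in $K_f\backslash(K_fW_1+K_fgW_2)\}$ behaves well enough — semicontinuously with the right flat limit — that the cycle class is genuinely constant. This is where the structure of the orthogonal Shimura variety and the explicit description of $D_{W,i}$ as a sub-Hermitian-domain enters: the normal bundle of each $Z(W)_{K_f}$ is computed from the restriction of $V$, and transversality of $D_{W_1,i}\cap D_{W_2,i}$ inside $D_i$ in the proper case is exactly the linear-algebra statement $W_{1,\sigma_i}\cap W_{2,\sigma_i}=0$. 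Once this is in hand, the statement follows; I would expect the write-up to lean on \cite[Proposition 2.2]{YZZ} and \cite[Proposition 2.6]{YZZ} for the technical heart and present the rest as a short deduction.
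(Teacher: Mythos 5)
Your first paragraph (the proper case via \cite[Proposition 2.2]{YZZ} and Proposition \ref{Prop:dimension}) is consistent with how the argument starts, but the device you propose for the improper case does not work, and it sidesteps the one computation that actually constitutes the proof. Perturbing $W_2$ to $gW_2$ for $g\in G(\A_f)$ does not preserve the class in the Chow group: $G(\A_f)$ is totally disconnected, so it is not ``connected'' in any sense that would produce an algebraic family interpolating between $Z(gW_2)_{K_f}$ and $Z(W_2)_{K_f}$, and Hecke translates of special cycles are in general \emph{not} rationally or algebraically equivalent --- indeed the whole point of the generating series is that the classes $Z(x,g)_{K_f}$ for varying $g$ are genuinely different elements of $\CH^{er}(M_{K_f})_{\C}$, summed against the weights $\phi_f(g^{-1}x)$. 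Likewise there is no flat limit ``$g\to 1$'' over which the set of admissible classes in $K_f\backslash(K_fW_1+K_fgW_2)$ could specialise; it jumps discretely. So the second paragraph of your proposal has a genuine gap that cannot be repaired along those lines.

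The issue you defer as ``the main obstacle I expect'' (excess-intersection correction terms from the excess normal bundle) is in fact the entire content of the proof, and it must be confronted rather than avoided. Following the proof of \cite[Proposition 2.6]{YZZ}, one reduces to the extreme improper case $\dim W_2=1$ with $Z(W_1)_{K_f}\subset Z(W_2)_{K_f}$, where the claim becomes $Z(W_1)_{K_f}\cdot Z(W_2)_{K_f}=Z(W_1)_{K_f}\cdot \mathrm{c}_1(\L)$. This is proved by the excess intersection formula: letting $\N$ be the restriction of the normal bundle of $Z(W_2)_{K_f}$ in $M_{K_f}$, the product equals $\mathrm{c}_e(\N)$ capped with the smaller cycle, and the computation of normal bundles of special cycles in \cite[Chapter 4]{Remarks} identifies this top Chern class with the tautological class built into the definition of $Z(x,g)_{K_f}$ through the factors $\mathrm{c}_1(\L^{\vee}_{K_f,i})$. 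Note that the excess bundle here has rank $e$ rather than $1$, so this Chern class identification is precisely the new input beyond \cite{YZZ} and cannot be quoted verbatim from the $e=1$ case; your write-up never performs it.
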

\begin{proof}
In the same way as the proof of \cite[Proposition 2.6]{YZZ}, we have to check that if $\dim W_2=1, Z(W_1)_{K_f}\subset Z(W_2)_{K_f}$, then $Z(W_1)_{K_f} \cdot Z(W_2)_{K_f}=Z(W_1)_{K_f} \cdot \mathrm{c_1}(\L)$.
Let $\N$ be the restriction of the normal bundle $\N_{Z(W_2)_{K_f}}({M_{K_f}})$ to $Z(W_2)_{K_f}$.
Now,
\[Z(W_1)_{K_f} \cdot Z(W_2)_{K_f}=\mathrm{c_e}(\N)\cap Z(W_2)_{K_f}\]
and by the calculation of normal bundles in \cite[Chapter 4]{Remarks}, we have
\[\mathrm{c_e}(\N)\cap Z(W_2)_{K_f}=Z(W_1)_{K_f} \cdot \mathrm{c_1}(\L).\]
\end{proof}

\subsection{The pull-back formula}
\label{Subsection:Formula}
Here we study the behavior of special cycles under the pull-back map.
Let $W\subset V$ be a totally positive $F$-vector subspace.
There exists a natural morphism
\[i_W\colon M_{K_f,W}\to M_{K_f},\]
which is a closed embedding if $K_f$ is sufficiently small.
Therefore we get a pull-back map of Chow groups:
\[i_W^*\colon\CH^{er}(M_{K_f})\to\CH^{er}(M_{K_f,W}).\]
For $g'=(g_f',g_{\infty}')\in\Mp_{2r}(\A_F)=\Mp_{2r}(\A_{F,f})\times\Mp_{2r}(F_{\infty})$, we define
\[i_W^*(Z_{\phi_f})(g')\defeq\sum_{\substack{x\in K_f\backslash \widehat{V}^r \\  \mathrm{admissible}}}\omega_f(g_f')\phi_f(x)i_W^*(Z(x)_{K_f})W_{T(x)}(g_{\infty}').\]
For a Bruhat-Schwartz function $\phi_{2,f}\in\S((\widehat{W^{\perp}})^r)$, the theta function is defined by
\[\theta_{\phi_{2,f}}(g')\defeq\sum_{z\in W^r}\omega_{\A}(g')(\phi_{2,f}\otimes\varphi_+^d)(z).\]
\begin{prop}
\label{Prop:pull-back}
For a $K_f$-invariant Bruhat-Schwartz function
\[\phi_f=\phi_{1,f}\otimes\phi_{2,f}\in \S(\widehat{V}^r)=\S(\widehat{W}^r)\otimes_{\C}\S((\widehat{W^{\perp}})^r),\]
we have
\[i_W^*(Z_{\phi_f}(g'))=Z_{\phi_{1,f}}(g')\theta_{\phi_{2,f}}(g').\]

\end{prop}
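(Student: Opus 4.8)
The plan is to follow the strategy of \cite[Proposition 3.1]{YZZ}, transporting their argument from the case $e=1$ over a totally real field to the present situation with general $e$. The key point is that the pull-back formula is really a statement about the geometry of the intersection $Z(x)_{K_f} \cap M_{K_f,W}$, together with a bookkeeping identity for Bruhat--Schwartz functions under the decomposition $\widehat V = \widehat W \oplus \widehat{W^\perp}$, and both of these are essentially insensitive to the value of $e$.

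First I would unwind both sides coefficient-by-coefficient. For an admissible $x\in \widehat V^r$, write $x = x_1 + x_2$ with $x_1 \in \widehat W^r$ and $x_2 \in (\widehat{W^\perp})^r$ according to the orthogonal decomposition. The moment matrix splits as $T(x) = T(x_1) + T(x_2)$, so the archimedean Whittaker factor and the $q$-expansion behave multiplicatively; similarly $\omega_f(g_f')\phi_f(x) = \omega_f(g_f')\phi_{1,f}(x_1)\cdot \omega_f(g_f')\phi_{2,f}(x_2)$ by the tensor decomposition of the Weil representation. Thus the identity will reduce to the geometric claim that, for each admissible $x$,
\[
i_W^*\bigl(Z(x)_{K_f}\bigr) \;=\; \sum_{\substack{x_2' \in (\widehat{W^\perp})^r \\ \text{suitable}}} Z(x_1)_{K_f,W},
\]
where the sum accounts for the coset combinatorics of $K_f$ acting on the $\widehat{W^\perp}$-component — this is exactly what produces the theta series $\theta_{\phi_{2,f}}(g')$ on the right-hand side after one sums against $\phi_{2,f}$ and uses that $\theta_{\phi_{2,f}}$ is built from the sum over $z \in W^r$ with the factor $\varphi_+^d$ at the archimedean places coming from the positive-definite spaces $V_{\sigma_{e+1}},\dots,V_{\sigma_d}$ and the signature-$(n,2)$ contribution at $\sigma_1,\dots,\sigma_e$.

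For the geometric claim itself, I would argue as in \cite{YZZ}: the cycle $Z(U(x),g)_{K_f}$ is (a power of Chern classes times) the image of a sub-Shimura variety attached to $\GSpin(U(x)^\perp)$, and its scheme-theoretic intersection with $M_{K_f,W} = Z(W)_{K_f}$ is, by \cite[Proposition 2.2]{YZZ} and our Proposition \ref{Prop:intersection}, the sum of $Z(\widetilde W)_{K_f}$ over admissible classes $\widetilde W$ in $K_f\backslash(K_f U(x) + K_f W)$. When $x$ is admissible and one restricts to $M_{K_f,W}$, only the component $x_1 = \mathrm{pr}_{\widehat W}(x)$ survives in the appropriate sense, and the normal-bundle computation of \cite[Chapter 4]{Remarks} (already invoked in the proof of Proposition \ref{Prop:intersection}) identifies the excess contribution with the correct power of $\mathrm{c}_1(\L)$, so that the Chern-class normalization in the definition of $Z(x)_{K_f}$ is respected on both sides. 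Assembling these pieces and resumming gives $i_W^*(Z_{\phi_f}(g')) = Z_{\phi_{1,f}}(g')\,\theta_{\phi_{2,f}}(g')$.

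The main obstacle I anticipate is not any single deep input but the careful matching of the coset/lattice combinatorics: one must check that the double cosets $G_x(\A_f)\backslash G(\A_f)/K_f$ appearing in $Z(x)_{K_f}$, after pull-back, reorganize exactly into the product of the analogous double cosets for $G_W$ and the summation set $W^r$ defining $\theta_{\phi_{2,f}}$, with no discrepancy in multiplicities. In \cite{YZZ} this is handled for $e=1$; since $e$ enters only through the product $D = D_1\times\cdots\times D_e$ of factors and through the $e$-fold Chern class $\mathrm{c}_1(\L_{K_f,1})\cdots \mathrm{c}_1(\L_{K_f,e})$, and the normal bundle splits compatibly over the $e$ factors by \cite[Chapter 4]{Remarks}, the same combinatorial identity goes through verbatim. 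I would therefore present the proof as a reduction to \cite[Proposition 3.1]{YZZ}, indicating precisely these two modifications (the factor-by-factor normal bundle splitting and the Chern-class normalization) as the only new points.
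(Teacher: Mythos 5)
Your proposal is correct and follows essentially the same route as the paper: the paper's proof consists precisely of the observation that Proposition \ref{Prop:intersection} (the generalized intersection formula, resting on the normal bundle computation in \cite[Chapter 4]{Remarks}) lets one repeat the argument of \cite[Proposition 3.1]{YZZ} verbatim. Your write-up simply makes explicit the coefficient-wise unwinding and coset bookkeeping that the paper leaves implicit in its citation of \cite{YZZ}.
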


\begin{proof}
Proposition \ref{Prop:intersection} implies that the assertion is proved by the same way as \cite[Proposition 3.1]{YZZ}.
\end{proof}

\subsection{Weil representations}
Let $\psi\colon F\backslash\A_F\to \C^{\times}$ be the composite of the trace map  $F\backslash\A_F\to\Q\backslash\A$ and the usual additive character
\begin{align*}
\Q\backslash\A&\to\C^{\times}\\
(x_v)_v&\mapsto\exp(2\pi\sqrt{-1}(x_{\infty}-\sum_{v<\infty}\overline{x_v})),
\end{align*}
where $\overline{x_v}$ is the class of $x_v$ in $\Q_p/\Z_p$.

Let $W$ be a symplectic vector space of dimension $2r$ over $F$.
We consider a reductive dual pair $(\O(V),\Sp(W))$ in $\Sp(V\otimes_{F} W)$.
Then we get a Weil representation $\omega$ which is the action of $\Mp_{2r}(\A_F)\times\O(V(\A_F))$ on $\S(V(\A_F)^r)$.
Let $\omega_f$ and $\omega_{\A}$ the action of $\Mp_{2r}(\A_{F,f})$ on $\S(V(\A_{F,f})^r)$ and $\Mp_{2r}(\A_F)$ on $\S(V(\A_F)^r)$ respectively.
Here we put $F_{\infty}\defeq F\otimes_{\Q}\R\cong\prod_{i=1}^d\R$.

Now, we introduce the degenetate Whittaker function.
We shall use the same notation as in \cite{OrthogonalKudla}.
Let $(V_+,(\ ,\ )_+)$ be a positive definite quadratic space of dimension $n+2$ over $\R$ and $\omega_+$ be an action of $\Mp_{2r}(\R)$ to $\S(V_+^r)$.
Let $\varphi_+\in \S(V_+^r)$ be the Gaussian defined by
\[\varphi_+(x)\defeq\exp(-\pi((x_1,x_1)_++\dots+(x_r,x_r)_+))\quad(x=(x_1,\dots,x_r)\in V_+^r).\]
Let \[K_{\infty}\defeq\bigg\{ \begin{pmatrix}
p & q \\
-q & p
\end{pmatrix}
\in \Sp_{2r}(\R)
\ \bigg|\ (p+\sqrt{-1}q)\,\transp{(p-\sqrt{-1}q)}=1_r\bigg\}.\]
be the maximal compact subgroup in $\Sp_{2r}(\R)$ and $\widetilde{K}_{\infty}$ be the inverse image of $K_{\infty}$ in $\Mp_{2r}(\R)$.
Then the function $\varphi_+$ is an eigenvector with respect to the Weil representation $\omega_+$:
\[\omega_+(k)\varphi_+=\det(k)^{(n+2)/2}\varphi_+\quad (k\in \widetilde{K}_{\infty}).\]
For a symmetric matrix $T\in\Sym_r(\R)$ of size $r\times r$, we take an element $x\in V_+^r$ satisfying $\frac{1}{2}(x,x)_+=T$.
For $g_{\infty}\in\Mp_{2r}(\R)$, we define the degenerate Whittaker function by
\[W_{T}(g_{\infty})\defeq\omega_+(g_{\infty})\varphi_+(x).\]
For $T\in\Sym_{r}(F)$ and
\[g_{\infty}'=(g_{\infty,1}',\dots,g_{\infty,d}')\in\Mp_{2r}(F_{\infty})\cong\prod_{i=1}^{d}\Mp_{2r}(\R),\]
we set
\[W_{T}(g_{\infty}')\defeq W_{T^{\sigma_1}}(g_{\infty,1}')\cdots W_{T^{\sigma_d}}(g_{\infty,d}').\]

For $g'=(g_f',g_{\infty}')\in\Mp_{2r}(\A_F)=\Mp_{2r}(\A_{F,f})\times\Mp_{2r}(F_{\infty})$, we put
\[{Z_{\phi_f}(g')}
\defeq\sum_{x\in G(\Q)\backslash V^r}\sum_{g\in G_x(\A_f)\backslash G(\A_f)/K_f}\omega_f(g_f')\phi_f(g^{-1}x)Z(x,g)_{K_f}W_{T(x)}(g_{\infty}').\]
By the Fourier expansion, we consider ${Z_{\phi_f}(g')}$ as a formal power series with coefficients in $\CH^{er}(M_{K_f})_{\C}$.
Therefore, the modularity of the generating series $Z_{\phi_f}(\tau)$ is equivalent to the left $\Sp_{2r}(F)$-invariance of the function $Z_{\phi_f}(g')$ on $\Mp_{2r}(\A_F)$.

\section{Proof of Theorem \ref{MainTheorem1} (2) and Theorem \ref{MainTheorem2} (2)}
\label{Section:r=1proof}

\subsection{Cohomology of Shimura varieties of orthogonal type}
\label{Subsection:cohomology}
In this subsection, we shall prove if $n\geq 3$, then
\[H^{2e-1}(M_{K_f},\C)=0.\]

Recall that we have
\[M_{K_f}(\C)\cong\coprod_{\Gamma}X_{\Gamma},\]
where $X_{\Gamma}=\Gamma\backslash D$ and $\Gamma$ is a cocompact congruence subgroup of
\[\SO_0({V\otimes_{\Q}\R})\cong\SO_0(n,2)^e\times\SO(n+2)^{d-e}.\]
Here $\SO_0(V\otimes_{\Q}\R),\SO_0(n,2)$ denote the identity components of $\SO(V\otimes_{\Q}\R),\SO(n,2)$, respectively.
Therefore, it is enough to show $H^{2e-1}(X_{\Gamma},\C)=0$.

We put $G'\defeq(\Res_{F/\Q}\SO(V))(\R)$ and $\g'\defeq(\Lie G')\otimes_{\R}\C$.
We put
\[G'_i\defeq\begin{cases}\SO_0(n,2) & (1\leq i \leq e)\\\SO(n+2) & (e+1\leq i \leq d),\end{cases}\]
$\g'_i\defeq\Lie(G_i')\otimes_{\R}\C$.
We also put
\[K'_i\defeq\begin{cases}\SO(n)\times\SO(2) & (1\leq i \leq e)\\\SO(n+2) & (e+1\leq i \leq d)\end{cases}\]
and $K'\defeq K_1'\times\cdots\times K_d'$.
By the Matsushima formula, we can write the cohomology of $X_{\Gamma}$ as follows:
\[H^{2e-1}(X_{\Gamma},\C)\cong\bigoplus_{\pi\in \widehat{G'(\R)}}\mathrm{Int}_{\Gamma}(\pi)\otimes_{\C} H^{2e-1}(\g',K';\pi).\]
Here $\widehat{G'(\R)}$ is the set of irreducible unitary representations of $G'(\R)$, $\mathrm{Int}_{\Gamma}(\pi)$ is the one appearing in the decomposition
\[L^2(\Gamma\backslash G'(\R))\cong\mathop{\widehat{\bigotimes}}_{\pi \in \widehat{G'(\R)}}\mathrm{Int}_{\Gamma}(\pi)\otimes\pi.\]
Since $\pi$ is an irreducible unitary representation, $\pi$ decomposes as $\pi\cong\widehat{\otimes}_{i=1}^d\pi_i$.
See \cite[Theorem 1.2]{GourevitchKemarsky}.

Then by the K\"{u}nneth formula \cite[Section 1.3]{BorelWallach}, we have
\begin{equation}
\label{coh1}
H^{2e-1}(\g',K';\pi)\cong\bigoplus_{i_1+\dots+i_d=2e-1}\bigotimes_{k=1}^{d} H^{i_k}(\g'_k,K'_k;\pi_k).
\end{equation}
For $e+1\leq i\leq d$, we have $H^j(\g'_i,K'_i;\pi_i)=0$ for any $j\geq 1$ since $\SO(n+2)$ is compact.
Therefore (\ref{coh1}) can be written as follows:
\begin{eqnarray}
\label{coh2}
& & H^{2e-1}(\g',K';\pi) \\
& \cong & \biggr(\bigoplus_{i_1+\dots+i_e=2e-1}\bigotimes_{k=1}^{e}H^{i_k}(\g'_k,K'_k;\pi_k)\biggr)\otimes_{\C} \bigotimes_{k=e+1}^{d} H^0(\g'_k,K'_{k};\pi_{k}).
\nonumber
\end{eqnarray}

\begin{lem}
\label{Lem:VZK}
Assume $n\geq 3$.
For $1\leq i\leq e$, if $\pi_i$ is non-trivial, then we have
\[H^j(\g'_i,K'_i;\pi_i)=0\]
for $j=0,1$.
\end{lem}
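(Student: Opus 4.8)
The statement concerns $(\g'_i,K'_i)$-cohomology of an irreducible unitary representation $\pi_i$ of $G'_i = \SO_0(n,2)$ with $K'_i = \SO(n)\times\SO(2)$ in low degrees $j=0,1$. The case $j=0$ is immediate: $H^0(\g'_i,K'_i;\pi_i) = \pi_i^{K'_i} \cap (\text{$\g'_i$-invariants})$, which by irreducibility and unitarity is nonzero only when $\pi_i$ is trivial, so the hypothesis that $\pi_i$ is non-trivial kills it. The substance is the case $j=1$. The plan is to invoke the general vanishing theorem for $(\g,K)$-cohomology of unitary representations: if $\pi_i$ is a non-trivial irreducible unitary representation, then $H^j(\g'_i,K'_i;\pi_i) = 0$ for $j < \ell_0(G'_i)$, where $\ell_0$ is the vanishing range constant attached to the real group. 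For $\SO_0(n,2)$ this constant can be computed from the structure of the symmetric space (e.g. via Kumaresan's theorem, or the Vogan–Zuckerman classification of cohomological representations, or the lower bound $\ell_0 \geq$ the smallest degree in which a nontrivial $A_{\mathfrak q}(\lambda)$ has cohomology). I would quote \cite{BorelWallach} (Chapter II, the vanishing theorems) or the Vogan–Zuckerman paper for the precise statement.

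The key computational point is to check that for $\SO_0(n,2)$ with $n \geq 3$ one has $\ell_0 \geq 2$, i.e. that no non-trivial cohomological representation contributes in degree $1$. This is where I expect the real work to be. The cohomological unitary representations $A_{\mathfrak q}(\lambda)$ are indexed by $\theta$-stable parabolic subalgebras $\mathfrak q$, and the lowest degree in which $A_{\mathfrak q}(\lambda)$ has nonzero $(\g,K)$-cohomology equals $R_{\mathfrak q} = \dim(\mathfrak u \cap \mathfrak p)$. So I need: for every $\theta$-stable $\mathfrak q \subsetneq \g'_i$ with $\mathfrak q \neq \g'_i$, the quantity $\dim(\mathfrak u \cap \mathfrak p) \geq 2$. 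Concretely, the symmetric space for $\SO_0(n,2)$ has real dimension $2n$ and is Hermitian, and the only way to get $\dim(\mathfrak u \cap \mathfrak p) = 1$ would require a one-dimensional "building block," which the root structure of $\mathfrak{so}(n+2,\C)$ restricted to $\mathfrak p$ does not permit when $n \geq 3$ (for $n=1,2$ degenerate low-rank coincidences occur, which is exactly why the hypothesis $n\geq 3$ is needed). I would carry this out by listing the $\theta$-stable parabolics up to conjugacy and reading off $\dim(\mathfrak u\cap\mathfrak p)$ in each case, or more cleanly by citing the known description of the cohomology of compact quotients of the type IV domain $\mathbb{D}_n$ (e.g. from work on orthogonal Shimura varieties), where $H^1$ of any arithmetic quotient is purely of "Borel–trivial" type.

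So the skeleton is: (1) dispatch $j=0$ by the invariance argument; (2) cite the Vogan–Zuckerman / Borel–Wallach vanishing theorem reducing $j=1$ vanishing to the bound $R_{\mathfrak q}\geq 2$ for all proper $\theta$-stable $\mathfrak q$; (3) verify this bound for $\SO_0(n,2)$, $n\geq 3$, by the explicit root computation. The main obstacle is step (3), the bookkeeping over $\theta$-stable parabolics of $\mathfrak{so}(n+2,\C)$; one must be careful with the cases $n=3,4$ where small isogenies ($\mathfrak{so}(5)\cong\mathfrak{sp}(4)$, $\mathfrak{so}(6)\cong\mathfrak{sl}(4)$, $\mathfrak{so}(4)$ not simple) could in principle create a low-degree class, and showing these still respect $R_{\mathfrak q}\geq 2$ is the delicate endpoint of the argument. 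An alternative that sidesteps the parabolic bookkeeping is to quote directly a published computation of $H^1$ of orthogonal Shimura varieties attached to $(n,2)$ with $n\geq 3$; I would do this if available, but the Vogan–Zuckerman route is self-contained and is what I would write up.
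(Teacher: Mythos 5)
Your proposal is correct and follows essentially the same route as the paper, whose entire proof is a citation of Vogan--Zuckerman \cite[Theorem 8.1]{VZ} together with the Kumaresan vanishing theorem \cite[Section 3]{Kumaresan}; your outline simply unpacks what those references provide, namely the reduction to the bound $\dim(\mathfrak{u}\cap\mathfrak{p})\geq 2$ for all proper $\theta$-stable parabolics of $\SO_0(n,2)$ with $n\geq 3$, plus the immediate $j=0$ case from irreducibility and unitarity. No discrepancy in method, only in the level of detail supplied.
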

\begin{proof}
See \cite[Theorem 8.1]{VZ} and the Kumaresan vanishing theorem \cite[Section 3]{Kumaresan}.
\end{proof}

In the rest of this subsection, we assume $n\geq 3$.
Then, by Lemma \ref{Lem:VZK}, we can write (\ref{coh2}) as follows:
\begin{eqnarray}
\label{coh3}
& & H^{2e-1}(\g',K';\pi) \\
& \cong & \biggr(\bigoplus_{\substack{i_1+\dots+ i_e=2e-1 \\ 1\leq \exists j\leq e,\ \pi_j\mathrm{:trivial}}}\bigotimes_{k=1}^{e}H^{i_k}(\g'_k,K'_k;\pi_k)\biggr)\otimes_{\C} \bigotimes_{k=e+1}^{d} H^0(\g'_k,K'_{k};\pi_{k}).
\nonumber
\end{eqnarray}

Here we need the following lemma.
\begin{lem}
\label{Lem:trivial}
Let $L$ be a totally real number field, $V$ a non-degenerate quadratic space of dimension $n+2$ over $L$, and $\pi\cong\otimes_v\pi_v$ an automorphic representation of $\SO(V)(\A_L)$.
If there exists an archimedean place $w$ such that $\SO(V)(L_w)\cong\SO(n,2)$ and the restriction of $\pi_w$ to the identity component of $\SO(V)(L_w)$ is the trivial representation, then $\pi_v$ is a character for any places $v$.
\end{lem}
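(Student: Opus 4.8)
The plan is to pass to the simply connected cover $\Spin(V)$ and invoke strong approximation. Recall the exact sequence of algebraic groups $1\to\mu_2\to\Spin(V)\to\SO(V)\to 1$ over $L$. Pulling back automorphic forms along the map $\Spin(V)(\A_L)\to\SO(V)(\A_L)$, the representation $\pi$ (which we may realize inside the space of automorphic forms on $\SO(V)(L)\backslash\SO(V)(\A_L)$) produces an automorphic representation $\widetilde\pi\cong\otimes_v\widetilde\pi_v$ of $\Spin(V)(\A_L)$, after decomposing into irreducible constituents if necessary, whose local component $\widetilde\pi_v$ is the pullback of $\pi_v$ along $\Spin(V)(L_v)\to\SO(V)(L_v)$.

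First I would check that $\widetilde\pi_w$ is trivial. Since $L$ is totally real, $w$ is a real place, and by hypothesis $\SO(V)(L_w)\cong\SO(n,2)$; as $\Spin(V)(L_w)\cong\Spin(n,2)$ is connected, its image in $\SO(V)(L_w)$ is exactly the identity component $\SO_0(n,2)$. By hypothesis $\pi_w$ is trivial on $\SO_0(n,2)$, so $\widetilde\pi_w$ is trivial; equivalently, every vector in the space of $\widetilde\pi$ is right $\Spin(V)(L_w)$-invariant. The main step is now the following. For $\dim V=n+2\geq 3$ (in particular in the case of interest, $n\geq 3$) the group $\Spin(V)$ is connected, semisimple and simply connected, and $\Spin(V)(L_w)\cong\Spin(n,2)$ is non-compact; if $\Spin(V)$ is not $L$-almost-simple, each of its $L$-almost-simple factors is still non-compact at $w$. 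Hence strong approximation applies with respect to $S=\{w\}$, so that $\Spin(V)(L)\cdot\Spin(V)(L_w)$ is dense in $\Spin(V)(\A_L)$. A vector in the space of $\widetilde\pi$ is a continuous function on $\Spin(V)(L)\backslash\Spin(V)(\A_L)$ that is right $\Spin(V)(L_w)$-invariant, hence invariant under a dense subgroup, hence constant. Therefore $\widetilde\pi$ is the trivial representation of $\Spin(V)(\A_L)$; in particular $\widetilde\pi_v$ is trivial for every $v$, i.e. $\pi_v$ is trivial on the image of $\Spin(V)(L_v)$ in $\SO(V)(L_v)$.

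Finally, over $L_v$ the long exact sequence in Galois cohomology attached to $1\to\mu_2\to\Spin(V)\to\SO(V)\to 1$, together with Kummer theory, identifies the cokernel of $\Spin(V)(L_v)\to\SO(V)(L_v)$ with a subgroup of $H^1(L_v,\mu_2)\cong L_v^{\times}/(L_v^{\times})^2$ (the image of the spinor norm map), which is a finite abelian group. Thus $\pi_v$ factors through a finite abelian quotient of $\SO(V)(L_v)$, and being irreducible it must be one-dimensional, i.e. a character; this holds for every place $v$.

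The main obstacle I expect is the bookkeeping forced by the non-surjectivity of $\Spin(V)\to\SO(V)$ on $L_v$- and $\A_L$-points, which is why the pullback of $\pi$ must be handled at the level of automorphic forms and possibly broken into irreducible pieces, and which is also why the conclusion is only ``$\pi_v$ is a character'' rather than ``$\pi$ is trivial''; a secondary point is verifying the hypotheses of strong approximation in the few cases where $\Spin(V)$ is not absolutely simple. The non-compactness of $\Spin(n,2)$ at $w$, guaranteed by the signature hypothesis, is precisely what makes strong approximation available, and is the crux of the argument.
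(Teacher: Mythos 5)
The paper does not prove this lemma at all: its entire ``proof'' is a citation of Gorodnik--Maucourant--Oh, Lemma 3.24. Your argument --- pass to the simply connected cover $\Spin(V)$, use that $\Spin(n,2)$ is noncompact at $w$ to apply strong approximation away from $w$, conclude that automorphic forms pulled back along the covering map $p\colon\Spin(V)(\A_L)\to\SO(V)(\A_L)$ are constant, and then bound the cokernel of $\Spin(V)(L_v)\to\SO(V)(L_v)$ by the finite abelian group $L_v^{\times}/(L_v^{\times})^2$ via the spinor norm --- is the standard proof of statements of this type and is presumably the content of the cited lemma; so you have supplied an argument where the paper supplies none. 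Your identification of the image of $\Spin(n,2)$ with $\SO_0(n,2)$, and your caveats about the almost-simple factors in low dimension, are correct.

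One step needs tightening. As your own computation shows, the image of the pullback map $f\mapsto f\circ p$ consists of constant functions, so the object $\widetilde\pi$ you extract is simply the trivial representation of $\Spin(V)(\A_L)$; it is \emph{not} a realization whose local components are the full pullbacks $\pi_v\circ p_v$, and the phrase ``$\widetilde\pi_v$ is trivial, i.e.\ $\pi_v$ is trivial on the image of $\Spin(V)(L_v)$'' conflates the two. What the strong approximation step actually yields is that the trivial representation occurs as a \emph{constituent} of $\pi_v$ restricted to $H_v\defeq p_v(\Spin(V)(L_v))$ for each $v$ (after checking that $f\mapsto f\circ p$ is nonzero on some right translate of the realization of $\pi$, which holds because the cosets of $\SO(V)(L)\,p(\Spin(V)(\A_L))$ cover $\SO(V)(\A_L)$). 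The repair is one line: $H_v$ is normal in $\SO(V)(L_v)$, being the kernel of the spinor norm, so the space of $H_v$-fixed vectors in $\pi_v$ is a nonzero $\SO(V)(L_v)$-stable subspace, hence all of $\pi_v$ by irreducibility. With that inserted, the remainder of your argument (finite abelian quotient, hence $\pi_v$ a character) goes through.
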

\begin{proof}
See \cite[Lemma 3.24]{Manin}.
\end{proof}

The connected Lie group $\SO_0(n,2)$ is semisimple and has no compact factor.
Hence $\pi_i$ is the trivial representation for every $1\leq i\leq e$.
See \cite[Section 4.3.2, Example 4]{Warner}.

Then (\ref{coh3}) becomes as follows:
\begin{eqnarray}
\label{coh4}
& & H^{2e-1}(\g',K';\pi) \\
& \cong & \biggr(\bigoplus_{i_1+\dots+ i_e=2e-1}\bigotimes_{k=1}^{e}H^{i_k}(\g'_k,K'_k;1)\biggr)\otimes_{\C} \bigotimes_{k=e+1}^{d} H^0(\g'_k,K'_{k};\pi_{k}).
\nonumber
\end{eqnarray}

Finally by \cite[Section 5.10]{Hodgetype}, for $1\leq i\leq e$, we have $H^s(\g'_i,K'_i;1)=0$  if $s$ is odd.

Thus, \[H^{2e-1}(\g',K';\pi)=0.\]
Combining the above results, we get the following theorem.
\begin{thm}
\label{Theorem:CohomologyIsVanish}
Assume $n\geq 3$.
Then we have \[H^{2e-1}(M_{K_f},\C)=0.\]
\end{thm}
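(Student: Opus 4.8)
The plan is to reduce the vanishing of $H^{2e-1}(M_{K_f},\C)$ to a purely representation-theoretic statement about $(\g',K')$-cohomology via the Matsushima formula, and then to kill each surviving summand using archimedean vanishing theorems. First I would write $M_{K_f}(\C)$ as a finite disjoint union of locally symmetric spaces $X_\Gamma = \Gamma\backslash D$, where $\Gamma$ is a cocompact congruence lattice in the real group $\SO_0(n,2)^e \times \SO(n+2)^{d-e}$; cocompactness holds precisely because $1\le e<d$, which forces the quadratic form to be anisotropic over $F$. It therefore suffices to prove $H^{2e-1}(X_\Gamma,\C)=0$ for each $\Gamma$. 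Applying the Matsushima formula I express this cohomology as $\bigoplus_\pi \mathrm{Int}_\Gamma(\pi)\otimes H^{2e-1}(\g',K';\pi)$, where $\pi$ ranges over irreducible unitary representations of $G'(\R)$ occurring in $L^2(\Gamma\backslash G'(\R))$. Since $G'(\R)$ is a product over the $d$ real places, $\pi$ factors as $\widehat{\otimes}_{i=1}^d \pi_i$, and the Künneth formula for relative Lie algebra cohomology rewrites $H^{2e-1}(\g',K';\pi)$ as a sum over $(i_1,\dots,i_d)$ with $\sum i_k = 2e-1$ of tensor products $\bigotimes_k H^{i_k}(\g'_k,K'_k;\pi_k)$.

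The next step is to prune this sum. For the $d-e$ compact factors $\SO(n+2)$ the cohomology $H^j(\g'_k,K'_k;\pi_k)$ vanishes for all $j\ge 1$, so every surviving term has $i_k=0$ for $k>e$ and hence $\sum_{k\le e} i_k = 2e-1$; in particular some $i_j$ with $j\le e$ must be $0$ or $1$. Invoking Lemma \ref{Lem:VZK} (the Vogan--Zuckerman classification together with the Kumaresan vanishing theorem, valid because $n\ge 3$), the only way to have $H^{i_j}(\g'_j,K'_j;\pi_j)\ne 0$ with $i_j\in\{0,1\}$ is for $\pi_j$ to be trivial. At this point Lemma \ref{Lem:trivial} does the decisive work: triviality of one archimedean component of an automorphic representation of $\SO(V)(\A_F)$ at a place of signature $(n,2)$ forces every local component to be a character, hence (since $\SO_0(n,2)$ is semisimple with no compact factor, so admits no nontrivial finite-order characters) every $\pi_i$ for $1\le i\le e$ is trivial. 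So the only surviving summand is the one with all $\pi_i$ ($i\le e$) trivial.

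It then remains to compute $H^{2e-1}$ of the constant sheaf contribution, i.e. $\bigoplus_{i_1+\dots+i_e=2e-1}\bigotimes_{k=1}^e H^{i_k}(\g'_k,K'_k;\mathbf 1)$. Here $H^s(\g'_k,K'_k;\mathbf 1)$ is the cohomology of the compact dual Hermitian symmetric space of $\SO_0(n,2)$, which is concentrated in even degrees; by \cite[Section 5.10]{Hodgetype} it vanishes for $s$ odd. Since $2e-1$ is odd, in any decomposition $i_1+\dots+i_e=2e-1$ at least one $i_k$ is odd, so each term in the sum vanishes. Therefore $H^{2e-1}(\g',K';\pi)=0$ for every $\pi$, hence $H^{2e-1}(X_\Gamma,\C)=0$ and finally $H^{2e-1}(M_{K_f},\C)=0$. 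The main obstacle is the passage from ``one archimedean component trivial'' to ``all components characters'' — this is exactly where one must import the strong automorphic input of Lemma \ref{Lem:trivial}; without it one could not rule out a contribution in which, say, $i_1=1$ is paired with a nontrivial $\pi_1$ whose cohomology in other degrees compensates, and the even-degree argument of the last step would not by itself suffice.
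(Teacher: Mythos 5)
Your proposal is correct and follows essentially the same route as the paper: the decomposition into compact quotients $X_\Gamma$, the Matsushima formula and K\"unneth decomposition, the vanishing for the compact factors, Lemma \ref{Lem:VZK} to force some $\pi_j$ trivial, Lemma \ref{Lem:trivial} to propagate triviality to all $\pi_i$ with $i\leq e$, and finally the odd-degree vanishing of $H^s(\g'_k,K'_k;1)$. Your parenthetical remarks on why cocompactness holds and why at least one $i_k$ lies in $\{0,1\}$ are correct and slightly more explicit than the paper's exposition.
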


\begin{cor}
\label{Corollary:Injectivity}
Assume $n\geq 3$.
Assume moreover that Conjecture \ref{Beilinson-Bloch3} holds for $M_{K_f}$ and $m=e$.
Then the cycle map tensored with $\C$
\[cl^e_{\C}\colon\CH^e(M_{K_f})_{\C}\to H^{2e}(M_{K_f},\C)\] is injective.
\end{cor}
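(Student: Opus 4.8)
The plan is to obtain the corollary as a formal consequence of Theorem \ref{Theorem:CohomologyIsVanish} and Conjecture \ref{Beilinson-Bloch3}, the only work being a change of coefficient ring. First I would pass from the vanishing $H^{2e-1}(M_{K_f},\C)=0$ of Theorem \ref{Theorem:CohomologyIsVanish} to the rational statement $H^{2e-1}(M_{K_f},\Q)=0$. This is immediate: for $K_f$ sufficiently small, $M_{K_f}$ is a smooth projective variety over $\overline{\Q}\subset\C$ (as recalled in Section \ref{Subsection:Special cycles}), so $M_{K_f}(\C)$ is a compact complex manifold with finite-dimensional Betti cohomology, and $H^{2e-1}(M_{K_f},\C)\cong H^{2e-1}(M_{K_f},\Q)\otimes_{\Q}\C$; a finite-dimensional $\Q$-vector space whose complexification vanishes must itself vanish.

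Next I would apply Conjecture \ref{Beilinson-Bloch3} to $X=M_{K_f}$ with $m=e$. Its hypothesis $H^{2m-1}(X,\Q)=0$ is exactly the vanishing just established, so its conclusion gives that the cycle map $cl^e_{\Q}\colon\CH^e(M_{K_f})_{\Q}\to H^{2e}(M_{K_f},\Q)$ is injective, equivalently $\CH^e_{\mathrm{hom}}(M_{K_f})_{\Q}=0$. Finally I would tensor with $\C$: since $\C$ is flat over $\Q$, applying $-\otimes_{\Q}\C$ to the injection $cl^e_{\Q}$ and using the identifications $\CH^e(M_{K_f})_{\C}\cong\CH^e(M_{K_f})_{\Q}\otimes_{\Q}\C$ and $H^{2e}(M_{K_f},\C)\cong H^{2e}(M_{K_f},\Q)\otimes_{\Q}\C$ yields the injectivity of $cl^e_{\C}$.

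There is no serious obstacle here; the statement is essentially a bookkeeping consequence of the two inputs. The only points requiring a word of care are the standing assumption that $K_f$ is small enough for $M_{K_f}$ to be smooth projective over $\overline{\Q}$, so that Conjecture \ref{Beilinson-Bloch3} is applicable, and the harmless passage between $\Q$- and $\C$-coefficients on both the cohomology and the Chow side.
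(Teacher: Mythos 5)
Your argument is correct and is exactly the deduction the paper intends: Theorem \ref{Theorem:CohomologyIsVanish} gives the vanishing of $H^{2e-1}(M_{K_f},\C)$, hence of $H^{2e-1}(M_{K_f},\Q)$, so Conjecture \ref{Beilinson-Bloch3} applies with $m=e$ and the injectivity over $\C$ follows by flat base change from $\Q$. The paper states the corollary without proof precisely because it is this immediate bookkeeping consequence, so there is nothing to add.
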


\subsection{Proof of Theorem \ref{MainTheorem1} (2) and Theorem \ref{MainTheorem2} (2)}
\label{Subsection:r=1proof}
If $n\geq 3$, by Corollary \ref{Corollary:Injectivity}, the assertion follows from the results of Kudla \cite[Section 5.3]{Remarks} and Rosu-Yott \cite[Theorem 1.1]{RosuYott}.

If $n\leq 2$, we take a totally positive quadratic space $W$ of dimension $\geq 3$ over $F$.
We embed $V$ into $V\oplus W$.
For any $K_f$-invariant Bruhat-Schwartz functions $\phi_f\in\S(\widehat{V})$ and $\phi_f'\in\S(\widehat{W})$, using the pull-back formula (Proposition \ref{Prop:pull-back}), we get
\[i^*(Z_{\phi_f\otimes\phi_f'})(g')=Z_{\phi_f}(g')\theta_{\phi_f'}(g')\]
for any $g'\in\Mp_{2}(\A_F)$.
Since $Z_{\phi_f\otimes\phi_f'}(g')$ and $\theta_{\phi_f'}(g')$ are absolutely convergent and left $\SL_2(F)$-invariant, we conclude that $Z_{\phi_f}(g')$ is absolutely convergent and left $\SL_2(F)$-invariant.

The proof of Theorem \ref{MainTheorem1} (2) and Theorem \ref{MainTheorem2} (2) is complete.

\section{Proof of Theorem \ref{MainTheorem1} (1) and Theorem \ref{MainTheorem2} (1)}
\label{Section:generalproof}

\subsection{Invariance under the Siegel parabolic subgroup}
For $a\in\GL_{r}(F)$ and $u\in\Sym_r(F)$, we put $m(a)\defeq
\begin{pmatrix}
a & 0 \\
0 & \transp{a}^{-1}
\end{pmatrix}$ and $n(u)\defeq
\begin{pmatrix}
1 & u \\
0 & 1
\end{pmatrix}$.
The elements $m(a)$ and $n(u)$ generate the Siegel parabolic subgroup $P(F)\subset \Sp_{2r}(F)$.

For $g'\in\Mp_{2r}(\A_F)$, its infinity component in $\Mp_{2r}(F_{\infty})\cong\prod_{i=1}^d\Mp_{2r}(\R)$ is denoted by $g_{\infty}'=(g_{\infty,1}',\dots,g_{\infty,d}')$.
For $1\leq i\leq d$, we consider the Iwasawa decomposition of $g_{\infty,i}'$:
\[g_{\infty,i}'=
\begin{pmatrix}
1 & s_i \\
0 & 1
\end{pmatrix}
\begin{pmatrix}
t_i & 0 \\
0 & \transp{t}_i^{-1}
\end{pmatrix}
k_i\quad (s_i\in\Sym_r(\R),t_i\in\GL^+_r(\R),k_i\in \widetilde{K}_{\infty}).\]

For $T\in\Sym_{r}(\R)$, the degenerate Whittaker function satisfies the following formula:
\[W_{T}(g_{\infty,i}')=|\det(s_i)|^{(n+2)/4}\exp(2\pi \sqrt{-1}(\Tr(\tau_i T)))\det(k_i)^{(n+2)/2}\]
where $\tau_i=s_i+it_i\transp{t}_i$.

By \cite[Part I, Section 1]{Central}, $n(u)$ acts as follows:
\[\omega_f(n(u)_f)\phi_f(x)=\psi_f(\Tr(u_fT(x)))\phi_f(x).\]
Thus, we have
\begin{eqnarray*}
& &  \omega_f(n(u)_fg_f')(\phi_f)(x)Z(x)_{K_f}\prod_{i=1}^{d}W_{T(x)}(n(u)_{\infty,i}g_{\infty,i}') \\ &=&\psi_f(\Tr(u_fT(x_f)))\omega_f(g_f')\phi_f(x)Z(x)_{K_f}\psi_{\infty}(\sum_{i=1}^{d}\Tr(u_{i,\infty}T(x)_{\infty,i}))\prod_{i=1}^{d}W_{T(x)}(g_{\infty,i}') \\
&=&\psi(\Tr(uT(x)))\omega_f(g_f')\phi_f(x)Z(x)_{K_f}\prod_{i=1}^{d}W_{T(x)}(g_{\infty,i}') \\
&=&\omega_f(g_f')(\phi_f)(x)Z(x)_{K_f}\prod_{i=1}^{d}W_{T(x)}(g_{\infty,i}').
\end{eqnarray*}

Therefore, we have the term-wise invariance under $n(u)$:
\[\omega_f(n(u)_fg_f')(\phi_f)(x)Z(x)_{K_f}W_{T(x)}(n(u)_{\infty}g_{\infty}')=\omega_f(g_f')(\phi_f)(x)Z(x)_{K_f}W_{T(x)}(g_{\infty}').\]

By the same way, we have
\[\omega_f(m(a)_fg_f')(\phi_f)(x)Z(x)_{K_f}W_{T(x)}(m(a)_{\infty}g_{\infty}')=\omega_f(g_f')(\phi_f)(xa)Z(x)_{K_f}W_{T(xa)}(g_{\infty}')\]
for any $a\in\GL_r(F)$.

On the other hand. we have $U(x)=U(xa)$, so $Z_{\phi_f}(x)=Z_{\phi_f}(xa)$.
Therefore, combining the above calculation and the fact $Z_{\phi_f}(x)=Z_{\phi_f}(xa)$, we conclude that
\begin{eqnarray*}
Z_{\phi_f}(\omega_f(m(a))g')
&=& \sum_{\substack{x\in K_f\backslash \widehat{V}^r \\ \mathrm{admissible}}}\omega_f(g_f')\phi_f(xa)Z(xa)_{K_f}W_{T(xa)}(g_{\infty}') \\
&=& \sum_{\substack{x\in K_f\backslash \widehat{V}^r \\ \mathrm{admissible}}}\omega_f(g_f')\phi_f(x)Z(x)_{K_f}W_{T(x)}(g_{\infty}') \\
&=& Z_{\phi_f}(g').
\end{eqnarray*}
This shows $Z_{\phi_f}(g')$ is invariant under the action of the Siegel parabolic subgroup $P(F)$.

\subsection{Invariance under $w_1$}
By Proposition \ref{Prop:pull-back}, we get following expression. (For details, see  \cite{YZZ}.)
\[Z_{\phi_f}(\tau)=\sum_{\substack{y\in K\backslash \widehat{V}^{r-1} \\ \mathrm{admissible}}}\sum_{x_2\in Fy}\sum_{\substack{x_1\in K_{f,y}\backslash y^{\perp} \\ \mathrm{admissible}}}\phi_f(x_1+x_2,y)Z(x_1)_{K_{f,y}}q^{T(x_1+x_2,y)}\]
Thus we have
\begin{eqnarray*}
Z_{\phi_f}(g')
&=&\sum_{\substack{y\in K_f\backslash \widehat{V}^{r-1} \\ \mathrm{admissible}}}\sum_{x_2\in Fy}\sum_{\substack{x_1\in K_{f,y}\backslash y^{\perp} \\ \mathrm{admissible}}}\omega_f(g_f')\phi_f(x_1+x_2,y)Z(x_1)_{K_{f,y}}W_T(x_1+x_2,y)(g_{\infty}')\\
&=&\sum_{\substack{y\in K_f\backslash \widehat{V}^{r-1} \\ \mathrm{admissible}}}\sum_{x_2\in Fy}\sum_{\substack{x_1\in K_{f,y}\backslash y^{\perp} \\ \mathrm{admissible}}}\omega_{\A}(g')(\phi_f\otimes\varphi_+^d)(x_1+x_2,y)Z(x_1)_{K_{f,y}}.
\end{eqnarray*}
Here $\phi^x(x,y)$ is the partial Fourier transformation with respect to the first coordinate.

Now, by Theorem \ref{MainTheorem1} (2) and Theorem \ref{MainTheorem2} (2),
we have
\begin{eqnarray*}
Z_{\phi_f}(w_1g')
&=&\sum_{\substack{y\in K_f\backslash \widehat{V}^{r-1} \\ \mathrm{admissible}}}\sum_{x_2\in Fy}\sum_{\substack{x_1\in K_{f,y}\backslash y^{\perp} \\ \mathrm{admissible}}}\omega_{\A}(g')(\phi_f\otimes\varphi_+^d)^{x_2}(x_1+x_2,y)Z(x_1)_{K_{f,y}}.
\end{eqnarray*}
Here we use the fact that \[\omega_{\A}(w_1)(\phi_f\otimes\varphi_+^d)(x,y)=(\phi_f\otimes\varphi_+^d)^{x}(x,y).\]

By the Poisson summation formula, this equals to
\[\sum_{\substack{y\in K_f\backslash \widehat{V}^{r-1} \\ \mathrm{admissible}}}\sum_{x_2\in Fy}\sum_{\substack{x_1\in K_{f,y}\backslash y^{\perp} \\ \mathrm{admissible}}}\omega_{\A}(g')(\phi_f\otimes\varphi_+^d)(x_1+x_2,y)Z(x_1)_{K_{f,y}},\]
which coincides with the definition of $Z_{\phi_f}(g')$.
Therefore, we get
\[Z_{\phi_f}(w_1g')=Z_{\phi_f}(g').\]
This shows the function $Z_{\phi_f}(g')$ is invariant under $w_1$.

The proof of Theorem \ref{MainTheorem1} and Theorem \ref{MainTheorem2} is complete.

\subsection*{Acknowledgements}
The author would like to express his gratitude to his adviser, Tetsushi Ito, for helpful and insightful comments.
The author would also like to offer special thanks to Stephan Kudla for invaluable comments and warm encouragement, Shuji Horinaga for giving advice on the Weil representations, and Kazuhiro Ito for suggestion on the calculation of the Lie algebra cohomology.

\end{document}